\newtheorem{theorem}{Theorem}
\newtheorem{lemma}{Lemma}
\newtheorem{conj}{Conjecture}
\newcommand{\eps}{\varepsilon}
\newcommand{\zeile}{\vspace{\baselineskip}}
\newcommand{\N}{\mathbb{N}}
\newcommand{\Z}{\mathbb{Z}}
\newcommand{\C}{\mathbb{C}}
\begin{document}

\title{Asymptotics of Goldbach representations}
\author{Gautami Bhowmik}
\address{G. Bhowmik: Laboratoire Paul Painlev{\'e}, Labex CEMPI, Universit{\'e} Lille, 59655
Villeneuve d'Ascq Cedex, France}
\email{Gautami.Bhowmik@math.univ-lille1.fr}

\author{Karin Halupczok}
\address{K. Halupczok: Mathematisch-Naturwissenschaftliche Fakult{\"a}t
Hein\-rich-Heine-Universit{\"a}t D{\"u}sseldorf, Universit{\"a}tsstr. 1,
40225 D{\"u}ssel\-dorf, Germany}
\email{karin.halupczok@uni-duesseldorf.de}
\subjclass[2010]{11P32, 11M26, 11M41}
\keywords{Goldbach problem, Exceptional Sets, Dirichlet $L$-~function, Riemann hypothesis,
  Siegel zero}
\subjclass[2010]{11P32, 11M26, 11M41}
\maketitle

\hfill {\it Dedicated to Kohji Matsumoto}

\begin{abstract} We present a historical account of the asymptotics of classical Goldbach representations with 
special reference to the equivalence with the Riemann Hypothesis. When the primes are chosen from an arithmetic progression  comparable but weaker relationships exist with the zeros of L-functions. 
\end{abstract}

\section{Introduction}

One of the oldest open problems today, known as the Goldbach conjecture, is to know if every even integer greater than 2 can be expressed 
as the sum of two prime numbers.
It is known since very long that the conjecture
is statistically true and it is empirically supported by calculations  for all numbers the threshold of which has gone up from   
$10,000$ in 1855 \cite{Desboves} to $4\times 10^{18}$ \cite{OHP} in 2014.  
In Section~\ref{sec:HaLi} we give some historical results that  support the Goldbach  conjecture.

Though the conjecture in its totality seems out of reach at the moment it generates 
a lot of mathematical activity. What follows, for the most part {\it expository}, concerns only a few of these aspects while  many are obviously omitted.

Instead of studying directly the Goldbach function $g(n)=\sum_{p_{1}+p_{2}=n}1$
which counts the number of representations of an integer $n$
as the sum of two primes $p_{1}$ and $p_{2}$ and is expected to be non-zero for even $n>2$, it is convenient to treat  a smoother version using logarithms.
This is easier from the point of view of analysis and the preferred function here is  the {\it weighted } Goldbach function
\[
    G(n)=\sum_{m_{1}+m_{2}=n}\Lambda(m_{1})\Lambda(m_{2})
\]

where  $\Lambda$ denotes von Mangoldt's function
\[
   \Lambda(n)=\begin{cases}
       \log p, &\text{ if } n=p^{k} \text{ for some prime } p
   \text{ and } k\in\N\\ 0, &\text{ otherwise}
  \end{cases}
\]
so that  $g(n)$ can be recovered from $G(n)$ by the use of partial summation
and  the last being sufficiently large, more precisely $G(n) > C\sqrt n$, would imply the Goldbach conjecture. 
As is common in analytic number theory we study the easier question  of the average order of the Goldbach functions
where the first results are at least as old as Landau's.

The dominant term in these results can be obtained easily but the oscillatory term involves infinitely many nontrivial zeros of the Riemann zeta function
 and any good asymptotic result involving upper bounds on the error term is conditional to the Riemann Hypothesis (RH). The lower bounds are
however unconditional. We indicate some such average results in Section~\ref{sec:mean}.
 
 Interestingly obtaining a good average order is actually equivalent to the Riemann Hypothesis.  
 We include the proof  of the last statement in Section 
~\ref{app:RHGo} since it has to be extracted from scattered parts in other papers.
 
 A variation of the classical Goldbach problem is one where the summands are 
primes  in arithmetic progressions.  We present some information on the exceptional set in 
Section~\ref{sec:ExcSetsAP}
and a proof in Section
~\ref{app:lem1}.  In this context average orders with good error terms are  necessarily conditional to the appropriate 
Generalised Riemann Hypothesis (GRH). Equivalences with such hypotheses till
now exist only in special cases (Theorem \ref{th:BHMS2}) 
and seem difficult in general because of the possible Siegel zeros. Some of this is explained in Section~\ref{sec:SZe}.

\section{Goldbach Conjecture is often true}
\label{sec:HaLi}

\subsection{Hardy--Littlewood conjecture}
\label{sec:HL}

Hardy and Littlewood \cite{HL} pursued the ideas of Hardy--Ramanujan\cite {HR}
and expected an asymptotic
formula  to hold for $G(n)$, which is,
\begin{conj}[Hardy--Littlewood]
\label{conj:HL}
  The approximation $G(n)\sim J(n)$ holds for even $n$ with 
  \[
  J(n):= n C_{2} \prod_{\substack{p\mid n \\ p>2}} \frac{p-1}{p-2}
  \]
and $C_{2}:=2\prod_{p>2}\Big(1-\frac{1}{(p-1)^{2}}\Big)$.
\end{conj}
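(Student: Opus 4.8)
The natural line of attack is the Hardy--Littlewood circle method, which is exactly the framework in which Conjecture~\ref{conj:HL} was first formulated. The plan is to write $G(n)$ as a single integral over the circle and to separate an expected main term from an error one hopes to control. Setting $S(\alpha):=\sum_{m\le n}\Lambda(m)e^{2\pi i m\alpha}$, orthogonality gives
\[
G(n)=\int_{0}^{1}S(\alpha)^{2}e^{-2\pi i n\alpha}\,d\alpha,
\]
and I would dissect $[0,1]$ into major arcs $\mathfrak{M}$, consisting of $\alpha$ close to a rational $a/q$ with $q\le Q$ for a suitable parameter $Q=Q(n)$, together with the complementary minor arcs $\mathfrak{m}$.

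On $\mathfrak{M}$ I would replace $S(\alpha)$ near $a/q$ by its expected approximation, using the Siegel--Walfisz theorem to control the distribution of $\Lambda(m)$ in residue classes $m\equiv a\mo q$ for $q\le(\log n)^{A}$. This produces the anticipated main term
\[
\int_{\mathfrak{M}}S(\alpha)^{2}e^{-2\pi i n\alpha}\,d\alpha = \mathfrak{S}(n)\,n + o(n),\qquad \mathfrak{S}(n)=\sum_{q=1}^{\infty}\frac{\mu(q)^{2}}{\ph(q)^{2}}c_{q}(n),
\]
with $c_{q}(n)$ the Ramanujan sum. The remaining step is purely algebraic: evaluate the singular series as an Euler product and match it with $J(n)/n$. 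A direct computation gives $\mathfrak{S}(n)=\prod_{p\nmid n}\bigl(1-\tfrac{1}{(p-1)^{2}}\bigr)\prod_{p\mid n}\bigl(1+\tfrac{1}{p-1}\bigr)$; since $n$ is even the factor at $p=2$ contributes $2$, while for odd $p\mid n$ the ratio $\bigl(1+\tfrac{1}{p-1}\bigr)\big/\bigl(1-\tfrac{1}{(p-1)^{2}}\bigr)$ collapses to $\tfrac{p-1}{p-2}$, so that $\mathfrak{S}(n)=C_{2}\prod_{p\mid n,\,p>2}\tfrac{p-1}{p-2}=J(n)/n$. Hence the major arcs deliver exactly $J(n)$.

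Everything therefore reduces to showing that the minor arcs are negligible, i.e. $\int_{\mathfrak{m}}S(\alpha)^{2}e^{-2\pi i n\alpha}\,d\alpha=o(n)$, and this is the crux, indeed the entire difficulty. The only general tool available is the pointwise Vinogradov bound $\sup_{\alpha\in\mathfrak{m}}|S(\alpha)|\ll n(\log n)^{-A}$, combined with Parseval's identity $\int_{0}^{1}|S(\alpha)|^{2}\,d\alpha=\sum_{m\le n}\Lambda(m)^{2}\sim n\log n$. Feeding these into the trivial estimate yields only
\[
\Bigl|\int_{\mathfrak{m}}S(\alpha)^{2}e^{-2\pi i n\alpha}\,d\alpha\Bigr|\le \sup_{\mathfrak{m}}|S|\int_{0}^{1}|S|^{2}\,d\alpha\ll n^{2}(\log n)^{1-A},
\]
which exceeds the main term $J(n)\asymp n$ by a full factor of $n$.

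This loss of an entire power of $n$ is the genuine obstacle, and it is structural rather than technical: with only two factors of $S$ the second moment absorbs one of them and, since we treat a single $n$, no averaging is permitted, so the cancellation needed to beat $n$ cannot be recovered. This is precisely what separates the binary problem from Vinogradov's ternary theorem, where the cube $S(\alpha)^{3}$ leaves a spare factor and the same minor-arc input succeeds. I do not expect to surmount this with current technology; a genuine proof would require either pointwise minor-arc cancellation for the square far beyond present knowledge, or an entirely new input. This is why Conjecture~\ref{conj:HL} remains open for individual $n$, and why one is forced to retreat to the averaged statements discussed in the sections that follow.
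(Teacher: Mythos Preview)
The statement you are addressing is labelled \emph{Conjecture} in the paper, and the paper offers no proof of it---nor could it, since Conjecture~\ref{conj:HL} is famously open. There is therefore nothing in the paper to compare against: the authors simply record the Hardy--Littlewood asymptotic and immediately pass to the error $F(n)=G(n)-J(n)$ and its moments. Your write-up is not a proof but a (correct) heuristic derivation of the singular series together with an honest diagnosis of why the circle method stalls for the binary problem, and in that sense it is entirely appropriate; your evaluation of $\mathfrak{S}(n)$ as an Euler product and its identification with $J(n)/n$ are clean and accurate.

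One small slip worth fixing: the displayed minor-arc inequality $\sup_{\mathfrak m}|S|\cdot\int_0^1|S|^2\,d\alpha$ is the bound one writes down for \emph{three} copies of $S$, not two. For the binary integral one only has
\[
\Bigl|\int_{\mathfrak m}S(\alpha)^2e(-n\alpha)\,d\alpha\Bigr|\le\int_{\mathfrak m}|S(\alpha)|^2\,d\alpha\le\int_0^1|S(\alpha)|^2\,d\alpha\sim n\log n,
\]
which is already not $o(n)$; there is no spare factor of $S$ on which to spend the Vinogradov pointwise bound. This actually sharpens your structural point about the binary--ternary gap, but the inequality as you displayed it does not literally bound the quantity on its left.
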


The above $C_2$  is known as the twin prime constant and is approximately 1.32.
Our interest is largely in the error term
\[
  F(n):= G(n)-J(n)
\]
of the above conjecture, 
where $J(n):=0$ for odd $n$. 
What can be said about $F(n)$? How can we estimate it to show that it
is indeed small? Where are the sign changes?

To tackle these questions, it is natural to consider the average
$\sum_{n\leq x} F(n)$ or the second moment $\sum_{n\leq x} F(n)^{2}$.
Motivated by Conjecture \ref{conj:HL}, we should expect  
$o(x^{3})$ for the second moment and $o(x^{2})$ for the average.

\subsection{Exceptional Sets}
\label{sec:ExcAP}

Using what is now called the  Hardy--Little\-wood--Ramanujan circle method, and  assuming the GRH, Hardy and Littlewood proved \cite{HL}  the estimate
\[
   \sum_{n\leq x} F(n)^{2}\ll x^{5/2+\eps}
\]
for the second moment.

From these nontrivial bounds for the second moment
we are able to deduce estimates for the number of exceptions to the representation as the sum of two primes
in the
following way. 

Let $E(x)=\#\{n\le x, n\in2\N; \ n\neq p_{1}+p_{2}\}$
denote the size of the exceptional class depending on $x$, i.e.\ the number of even integers up to $x$ which do not satisfy the Goldbach conjecture.
For these exceptions, we have $|F(n)|\geq c n$ for some constant
$c>0$, since $J(n)\geq C_{2}n$. Therefore
\[
   x^{2}E(x) \ll 
   \sum_{\substack{n\leq x\\ 2\mid n\\ n \neq p_{1}+p_{2}}} n^{2} \ll
   \sum_{n\leq x} |F(n)|^{2} \ll x^{5/2+\eps},
\]
so  after a dyadic dissection $$E(x)\ll x^{1/2+\eps}$$ under the GRH.

More than 60 years later  Goldston \cite{Gol}  improved the Hardy--Littlewood bound to
\begin{equation}\label{eq:GBGol}
E(x)\ll x^{1/2}\log^{3} x
\end{equation}
still under the assumption of GRH.

By now many authors have succeeded in obtaining nontrivial bounds for $\sum_{n\leq
  x}F(n)^{2}$ unconditionally and we know that the
Hardy--Littlewood conjecture is true {\it on average}. 

Just after 1937 when  Vinogradov's method \cite{Vino} became available, Van der Corput, Chudakov and Estermann  
\cite{vdc,chu,est} independently obtained the first unconditional estimate of the type $E(x)=o(x)$. More precisely,
they proved that
\[\sum_{n\leq x}F(n)^{2}\ll
x^{3} \log ^{-A}x\]
so that 
\begin{equation}
\label{eq:cce}
E(x)\ll 
x\log^{-A} x
\end{equation}
for any $A>0$.

Thus we know that the Goldbach conjecture is true {\it in a statistical sense}.

In 1975 Montgomery and Vaughan \cite{MV}, improved this by using an effective form of Gallagher's work on distribution of zeros 
of $L$-functions and showed
that there exists a positive effectively computable constant $\delta>0$ such that, 
for all large $x>x_0(\delta)$, $$E(x)\le x^{1-\delta}.$$

The best published proof as of now is 
with $\delta=0.121$, i.e.\ 
\begin{equation}
\label{eq:lu}
E(x)\ll x^{0.879}
\end{equation}
by Lu \cite{Lu} who obtained this by a variation of the circle method.
In the very recent preprint \cite{pin} of Pintz  the bound
\begin{equation}
  \label{eq:pin}
  E(x)\ll x^{0.72}
\end{equation}
is achieved, i.e.\ $\delta=0.28$.

Further information can be found in the survey article \cite{VauOver}
of Vaughan.

\subsection{Average Orders}\label{sec:mean}
As early as 1900 an asymptote for the average order of $g(n)$ was known
due to Landau \cite{Land} who showed that

\[
\sum_{n\le x} g(n)\sim \frac{1}{2}\frac{x^2}{\log^{2} x}.
\]
His result agreed with the conjecture that $g(n)$ should be
approximated by $\frac{J(n)}{\log^2 n}$.

After almost a century  Fujii \cite{fuj,fuj2} studied the  oscillating term. He first obtained, assuming the RH, that
\[\sum_{n\leq x}F(n)=O(x^{3/2})
\]
using the work of Gallagher of 1989.

Fujii could then extract  an error term smaller than the main oscillating term
  \cite{fuj2}, i.e.\
\begin{theorem}[Fujii's theorem]
 Assume RH. Then, for $x$ sufficiently large, we have
   \[
       \sum_{n\leq x}F(n) =-4x^{3/2}\Re\Big(\sum_{\gamma>0}
       \frac{x^{i\gamma}}{(1/2+i\gamma)(3/2+i\gamma)}\Big) 
      + O( (x\log x)^{4/3})
   \]
   where $\gamma$ denotes the imaginary parts of the zeros of the Riemann zeta function.
\end{theorem}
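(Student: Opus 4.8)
The plan is to start from the explicit formula for $\psi(x)=\sum_{n\le x}\Lambda(n)$ and feed it into a convolution identity for $\sum_{n\le x}G(n)$. Writing $\psi(x)=x-\sum_\rho x^\rho/\rho - \log 2\pi - \tfrac12\log(1-x^{-2})$, one has
\[
\sum_{n\le x}G(n)=\sum_{m\le x}\Lambda(m)\,\psi(x-m)=\int_0^x \psi(t)\,d\psi(x-t)+\text{(boundary)},
\]
so that after inserting the explicit formula for each factor the sum splits into a main term of size $\tfrac12 x^2$, a single sum over zeros producing a term of order $x^{3/2}$, a diagonal double sum over pairs of zeros $\rho,\rho'$, and lower-order pieces. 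The arithmetic main term $\sum_{n\le x}J(n)$ must be computed separately from the singular series; subtracting it from $\sum_{n\le x}G(n)$ is what isolates $\sum_{n\le x}F(n)$ and, in particular, cancels the $\tfrac12 x^2$ and the elementary secondary terms. The surviving genuinely oscillatory contribution of largest order is the single sum over zeta zeros; under RH, $\rho=\h+i\gamma$, and a partial-summation / residue computation turns $\sum_\rho \frac{x^{\rho+1}}{\rho(\rho+1)}$ into the stated $-4x^{3/2}\Re\sum_{\gamma>0}\frac{x^{i\gamma}}{(\h+i\gamma)(3/2+i\gamma)}$, the factor $4$ and the real part coming from pairing $\gamma$ with $-\gamma$ and from the two $\Lambda$-factors.

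Next I would control the double sum over pairs of zeros $\sum_{\rho,\rho'} x^{\rho+\rho'}/(\rho\rho'(\rho+\rho'))$, which under RH has terms of modulus $x^{2}/(|\gamma|\,|\gamma'|\,|\gamma+\gamma'|)$. This does not converge absolutely, so one cannot simply bound it termwise; the standard device is to truncate the explicit formula at height $T$, i.e.\ use $\psi(x)=x-\sum_{|\gamma|\le T}x^\rho/\rho+O\!\big(\frac{x\log^2(xT)}{T}\big)$, carry out the finite double sum, and then optimise $T$ in terms of $x$. Estimating the truncated double sum requires the classical zero-density / zero-counting input $N(T)\asymp T\log T$ together with careful treatment of the "near-diagonal" pairs where $\gamma+\gamma'$ is small (these are handled via $\sum_{|\gamma-\gamma_0|\le 1}1\ll\log(|\gamma_0|+2)$). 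Balancing the tail error $x\cdot x/T\cdot\log^{2}$-type terms against the main size of the truncated double sum is exactly what produces the exponent $4/3$ and the logarithmic factor in $O((x\log x)^{4/3})$.

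The main obstacle is this double sum over zero pairs: making the truncation error and the off-diagonal contribution genuinely smaller than $x^{3/2}$ is delicate, since naive bounds only give $O(x^{2-\varepsilon})$ or worse, which would swamp the main oscillating term. One has to exploit cancellation — or at least the precise density of zeros near a given height and near the "resonance" locus $\gamma+\gamma'\approx 0$ — and then choose $T$ (roughly $T\asymp x^{1/3}$, up to logs) to equalise the competing errors. A secondary technical point is justifying the interchange of summation and the passage from $\psi$ to $\Lambda$-weighted sums via partial summation without losing in the error term; and one must check that the elementary terms $\log 2\pi$ and $\tfrac12\log(1-x^{-2})$ in the explicit formula, once convolved, contribute only to the main term $J$ and to errors of size $O(x\log x)$, hence are absorbed. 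Once the truncation is in place, the remaining steps — evaluating the single sum over $\gamma$ in closed form and identifying $\sum_{n\le x}J(n)$ with the elementary main terms — are routine.
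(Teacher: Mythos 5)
First, note that the paper does not prove Fujii's theorem: it is quoted from \cite{fuj,fuj2}, and the nearest relative in the text is Step~1 of Section~\ref{app:RHGo}, where a sharper formula (error $x^{2B}\log^5 2x$, hence $x\log^5x$ under RH) is obtained by passing to exponential sums and invoking Gallagher's lemma. Your overall architecture is the standard one and matches Fujii's: the convolution identity $\sum_{n\le x}G(n)=\sum_{m\le x}\Lambda(m)\psi(x-m)$, insertion of the explicit formula, cancellation of $x^2/2$ against $\sum_{n\le x}J(n)$ up to $O(x\log x)$, and the cross terms producing $-2\sum_\rho x^{\rho+1}/(\rho(\rho+1))$, which under RH and conjugate pairing is exactly the stated $-4x^{3/2}\Re(\cdots)$ term. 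That part of your sketch is correct, including the accounting of the factor $4$.

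The genuine gap is the bilinear term in the zeros, which is where all the work of \cite{fuj2} lies, and your proposed mechanism for it is not the one that works. Two concrete problems. First, the bookkeeping: the pair $(\rho,\rho')$ enters through the Beta kernel $\Gamma(\rho)\Gamma(\rho')/\Gamma(\rho+\rho'+1)\,x^{\rho+\rho'}$, and under RH $|x^{\rho+\rho'}|=x$, not $x^2$ (since $\Re(\rho+\rho')=1$); the denominator is not $\rho\rho'(\rho+\rho')$, and the dangerous configurations are same-sign ordinates where the Gamma quotient only decays like $|\gamma+\gamma'|^{-3/2}$ (opposite signs give exponential decay). Second, and more importantly, truncating at height $T$ and estimating the truncated double sum with absolute values and $N(T)\asymp T\log T$ gives roughly $xT^{1/2}\log^2T$ against a truncation error $x^2\log^2x/T$, which balances at $T\asymp x^{2/3}$ (not $x^{1/3}$; it is the associated interval length $h\asymp x/T$ that is $x^{1/3}$) and yields $x^{4/3}\log^2x$ --- the exponent $4/3$ appears, but not the claimed power of the logarithm, and no step of this estimation is actually carried out in your sketch. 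The ingredient you are missing, which the paper explicitly flags (``using the work of Gallagher of 1989''), is the RH-conditional second-moment bound
\[
\int_1^X\bigl(\psi(t+h)-\psi(t)-h\bigr)^2\,\textup{d}t\ \ll\ hX\log^2(2X/h),
\]
applied after decomposing $\int_0^x(\psi(x-t)-(x-t))\,\textup{d}(\psi(t)-t)$ into blocks of length $h$ and using Cauchy--Schwarz together with $\int_0^X(\psi(t)-t)^2\,\textup{d}t\ll X^2$; optimizing $h$ is what produces $O((x\log x)^{4/3})$. As written, your proposal names the difficulty (``exploit cancellation'') but neither supplies this input nor any substitute for it, so the error term is not established.
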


In 2007-8 Granville \cite{Gra} also studied the average
$\sum_{n\leq x} F(n)$ and obtained the same error term.

It was conjectured by Egami and Matsumoto that the error term would be $O(x^{1+\epsilon})$ for $\epsilon$ positive
and this  was reached by Bhowmik and Schlage-Puchta \cite{B_SP} using the distribution of primes in short intervals to estimate exponential sums 
close to 0. More precisely, under the assumption of the RH,  the asymptotic result  therein can be stated in the form 
$$
\sum_{n\le x}G(n)=\frac{1}{2}x^2 + H(x) +O(x\log^5 x)
$$
where $H(x)=-2\sum_{\rho}\frac{x^{\rho +1}}{\rho(\rho +1)}$ is the oscillating term involving $\rho$, the non-trivial zeros of the Riemann zeta 
function.

Languasco and Zaccagnini \cite{LaZa} used the circle method to improve the power of the logarithm in the error term from $5$ to $3$.

Goldston and Yang \cite{GY} could also reach $O(x\log^{3} x)$ in 2017 using the method of 
\cite{B_SP}.

\subsection{Equivalence with RH}

While it is now clear that error terms for the average of $G(n)$ depends on the zeros of the Riemann zeta function  $\zeta(s)$ and hence 
is meaningful only assuming
a suitable hypothesis, it is interesting to see whether we can obtain information on the zeros of $\zeta(s)$ if we have an asymptotic expansion.
This was investigated by Granville in 2007 who  stated  that (Theorem 1A  in \cite{Gra}) 
\begin{theorem}
\label{th:AG1A}
The  RH is equivalent to the estimate $$\sum_{n\leq x} F(n) \ll x^{3/2+o(1)}.$$
\end{theorem}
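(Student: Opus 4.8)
The plan is to connect the summatory function $\sum_{n\le x}F(n)$ to a contour integral of a Dirichlet series whose analytic continuation is governed by $\zeta(s)$, and then run the argument in both directions. First I would recall the standard identity expressing $\sum_{n\le x}G(n)$ via the explicit formula: using $\sum_{n\le x}\Lambda(n)=\psi(x)$ and partial summation (or, more cleanly, a Perron/contour integral of $\left(-\zeta'/\zeta(s)\right)^2$ against $x^{s+1}$), one gets
\begin{equation}
\label{eq:Fsum-explicit}
\sum_{n\le x}F(n)=\sum_{n\le x}G(n)-\tfrac12 x^2+O(x\log^2 x)=-2\sum_{\rho}\frac{x^{\rho+1}}{\rho(\rho+1)}+O(x\log^2 x),
\end{equation}
where $\rho$ runs over the nontrivial zeros of $\zeta$. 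The forward implication (RH $\Rightarrow$ the bound) is then immediate: under RH each $\rho=\tfrac12+i\gamma$ has $|x^{\rho+1}|=x^{3/2}$, and the series $\sum_\rho |\rho(\rho+1)|^{-1}$ converges, so the oscillating term is $O(x^{3/2})$, which together with the error term gives $\sum_{n\le x}F(n)\ll x^{3/2}\log^2 x\ll x^{3/2+o(1)}$.

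For the converse, suppose $\sum_{n\le x}F(n)\ll x^{3/2+o(1)}$. The strategy is the classical Landau-type argument: if $\zeta$ had a zero $\beta+i\gamma$ with $\beta>\tfrac12$, then by the functional equation it would have one with real part $\Theta:=\sup\{\Re\rho\}>\tfrac12$, and I would show this forces $\sum_{n\le x}F(n)=\Omega(x^{\Theta+1-\eps})$ for every $\eps>0$, contradicting the hypothesis. Concretely, I would form the Mellin transform
\[
\int_1^\infty\Big(\sum_{n\le t}F(n)\Big)t^{-s-1}\,dt,
\]
which on one hand is, by the assumed bound, analytic in $\Re s>3/2$ (well, $\Re s>1/2$ after dividing by $t$, i.e. the relevant half-plane is $\Re s>\Theta+1$ only if the bound fails), and on the other hand equals, up to entire factors, a combination of $\zeta(s-1)$, $\zeta'/\zeta$-type terms and $\left(\zeta'/\zeta\right)^2$ shifted by $1$; tracking the rightmost singularity shows it sits at $s=\Theta+1$. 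Comparing the two descriptions of the abscissa of convergence yields $\Theta+1\le 3/2$, i.e. $\Theta\le\tfrac12$, which is RH.

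The main obstacle is making the converse rigorous when the supremum $\Theta$ is not attained or when zeros cluster: one cannot simply isolate a single pole, and must instead use an $\Omega$-result argument (à la Landau) showing that $\sum_{n\le x}F(n)$ cannot be $o(x^{\Theta+1-\eps})$ because the Dirichlet series $\sum_n F(n)n^{-s}$ (or its smoothed/integrated form) genuinely has abscissa of convergence $\Theta+1$. This requires that the contributions of the zeros near the line $\Re s=\Theta$ do not cancel in a way that artificially shrinks $\sum_{n\le x}F(n)$ — handled by the standard device of integrating against a suitable kernel or invoking that a Dirichlet series with nonnegative-enough structure has a singularity at its abscissa of convergence. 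A secondary technical point is controlling the contribution of prime powers (the difference between $G(n)$ and the prime-only count $g(n)$) and the diagonal terms $m_1+m_2=n$ with $m_1$ or $m_2$ small, but these are genuinely lower order, of size $O(x^{3/2})$ or smaller, and do not affect the equivalence. I would also note that the $o(1)$ in the exponent is exactly what absorbs the logarithmic losses in \eqref{eq:Fsum-explicit} and in the $\Omega$-estimate, so the equivalence is clean precisely at the level of $x^{3/2+o(1)}$ rather than $x^{3/2}$ or $x^{3/2}\log^C x$.
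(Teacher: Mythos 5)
Your forward direction is fine and matches the paper's (explicit formula for $\sum_{n\le x}G(n)$ plus RH). The converse, however, has two genuine gaps. First, your claim that the Mellin transform of $\sum_{n\le t}F(n)$ ``equals, up to entire factors, a combination of $\zeta(s-1)$, $\zeta'/\zeta$-type terms and $(\zeta'/\zeta)^2$ shifted by $1$'' is not available: $G(n)$ is an \emph{additive} convolution of $\Lambda$ with itself, so its Dirichlet series $F(s)=\sum_n G(n)n^{-s}$ is not a closed-form combination of zeta factors (indeed it is expected to have a natural boundary, cf.\ \cite{B_SP11}). The meromorphic continuation of $F(s)$ to the left of $\sigma=2$ has to be earned: one needs the unconditional asymptotic $S(x)=x^2/2-2\sum_\rho x^{\rho+1}/(\rho(\rho+1))+E(x)$ with $E(x)\ll x^{2B}\log^5 2x$, where $B=\sup\Re\rho$ (Theorem 2 of \cite{BHMS}, proved via Gallagher's lemma), and this only continues $F(s)-\frac{1}{s-2}$ meromorphically to $\sigma>2B$.

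Second, and more seriously, because the continuation only reaches $\sigma>2B$, the pole-detection argument (a zero $\rho$ with $\Re\rho$ near $B$ produces a pole of $F(s)$ at $s=\rho+1$ with residue $-2/\rho\neq 0$, contradicting analyticity on $\sigma>3/2$) only works when $1+B>2B$, i.e.\ when $B<1$. If $B=1$ the poles at $\rho+1$ sit on or outside the boundary of the region of continuation and no contradiction is obtained. This is exactly why Granville's original sketch was incomplete; excluding $B=1$ requires an entirely separate argument (Step 5 of the paper, from \cite{BhRu}): one passes to the power series $f(z)=\sum_n\Lambda(n)z^n$, uses the hypothesis to get $f^2(z)=(1-z)^{-2}+O(|1-z|N^{5/2+\eps})$ on a major arc of $|z|=e^{-1/N}$, takes the complex square root (fixing the sign by positivity of the coefficients), and feeds this into Cauchy's formula to obtain $\psi(N)=N+O(N^{11/12+\eps})$, whence $B\le 11/12<1$. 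The obstacle you flag (the supremum not being attained) is real but minor --- it is handled by a routine $\eps$-argument picking a zero with $\Re\rho>B-\eps$ --- whereas the $B=1$ obstruction is the actual crux and is absent from your plan.
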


 The sketch of proof in \cite{Gra} is not sufficient to obtain the RH from the above asymptote.
 However in the recent paper  by Bhowmik, Halup\-czok, Matsumoto and Suzuki \cite{BHMS}
we were able to reconstruct part of the proof
of this theorem and it was completed with Ruzsa in \cite{BhRu}. Some details are given in Section~\ref{app:RHGo}.

\subsection {Omega-results}

The lower bounds arise while showing that there exist $n$ for which 
$G(n)$ is large. Since the two main terms in the asymptotic expansion of
the average are continuous this already contributes to the error term.
It is to be noted that the lower bounds obtained are uncondtional unlike the upper ones.

The first observation seems to be that of Prachar back in 1954 \cite{Pra}
when he proved that there are infinitely many integers $n$ such that
$$ g(n)> C\frac{n}{\log^2 n} \log\log n.$$
His study was on the lines of  Erd\H os earlier for the number of solutions of $n$ as the sum of 2 prime squares with combinatorial arguments for the number of primes in residue classes modulo an integer with many prime factors.

Half a century later  Giordano \cite{Gio}  studied the irregularity of $g(n)$ 
depending on whether or not $n$ is divisible by many small primes by using 
the Prime Number Theorem for arithmetic progression.

Having overlooked these earlier results the authors of \cite{B_SP} again showed that the error term in $\sum_{n\le x}G(n)$ is $ \Omega (x \log \log x)$ 
by considering the exceptional bounded modulus for which a Siegel zero might exist and using Gallagher's density estimates.

Another way to find an omega term is to study the natural boundary of the generating function $\sum_n\frac{G(n)}{n^s}$ as was mentioned in \cite{B_SP11}.

\section{Goldbach representations
in arithmetic progressions}

As a restricted form  of the original problem one considers the possibility of representing every even number as the sum
of two primes in a given residue class.
Here again the conjecture is known to be almost always true. The exceptional set
$$E(x;q,a,b)=\#\{n\le x, n\in2\N; \ n\neq p_{1}+p_{2}; \ p_1\equiv a(q), p_2\equiv b(q)\}$$
is shown, for example in \cite{LZ}, to satisfy, for an effectively  computable positve $\delta$
$$ E(x;q,a,b)\ll \frac{x^{1-\delta}}{\phi(q)}$$
for all $q \le x^{\delta}$.

\subsection{Exceptional Sets}
\label{sec:ExcSetsAP}

We can apply the results of the estimation of the exceptional set of the unrestricted Goldbach representation
to obtain similar results for an arithmetic progression
with residue $h$ mod $q$. 
 Similar to $E(x;q,a,b)$, we define
\[
  E_{h,q}(x):=\#\{n\le x, n\in2\N; \ n\neq p_{1}+p_{2}; \ n\equiv h(q)\}.
\]
We show that  elementary methods already suffice to deduce
 nontrivial
estimates for the number of exceptions in an arithmetic progression
from available nontrivial estimates for $E(x)$.

Thus a very simple deduction from \eqref{eq:cce}  gives
the statement that for \emph{almost all residues} $h$ mod $q$ we have
\begin{equation}\label{trivial_hq}
    E_{h,q}(x)\ll \frac{x}{q \log ^{C} x}
\end{equation}
arising from the fact that $\sum_{0\leq h<q} E_{h,q}(x)=E(x)$ so that if
\[
   H_{q}:=\{0\leq h<q;\ E_{h,q}(x)>xq^{-1}\log ^{-C} x\}
\]
denotes the set of exceptional residues mod $q$, we have
\[
    \# H_{q} \frac{x}{q \log ^{C} x} < \sum_{h\in H_{q}}
    E_{h,q}(x) \leq E(x) \ll x(\log ^{-2C} x),
\]
and hence $\# H_{q}\ll q \log ^{-C} x $.

We remark that  the estimate (\ref{trivial_hq}) above,
for all $h$ mod $q$, is already nontrivial for modulii $q$ with
$q\gg \log ^{C} x$ for any $C>0$ which is equivalent to the condition
\[
   \frac{x}{q\log ^{C} x} \ll \frac{x}{\log ^{2C}x}.\]
 
Let us now introduce a handy notation. If $B\subseteq
\N$, we write  

$B(x):=\#\{n\leq x;\ n\in B\}$ and $B_{h,q}(x):=\#\{n\leq x;\ n\in
B,\ n\equiv~ h\;(q)\}$.

\begin{lemma}[Number of exceptions in progressions]
\label{lem:l}
  Let $B\subseteq \N$ be such that 
  $B(x) \ll x \log ^{-A} x$ for any
  $A>0$. Then for all $C>0$ and for almost all $q\leq x^{1/2}$ we have
  $B_{h,q}(x) \ll xq^{-1}\log ^{-C}x$, 
uniformly in all residues $h$ mod $q$.
\end{lemma}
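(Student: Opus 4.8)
The plan is to exploit the hypothesis $B(x) \ll x\log^{-A}x$ for \emph{every} $A>0$ by choosing $A$ much larger than $2C$, and then to show that only very few moduli $q$ can carry an ``overweight'' residue class. Fix $C>0$; we will produce the exceptional set of moduli. For a given $q$, call a residue $h \pmod q$ \emph{bad for $q$} if $B_{h,q}(x) > xq^{-1}\log^{-C}x$, and call $q$ itself \emph{bad} if it has at least one bad residue. The key inequality is that, summing the trivial identity $\sum_{0\le h<q} B_{h,q}(x) = B(x)$ over a bad $q$, each bad residue contributes more than $xq^{-1}\log^{-C}x$, so $q$ has at most $B(x)\big/(xq^{-1}\log^{-C}x) \ll q\log^{C-A}x$ bad residues; but more usefully, the mere \emph{existence} of one bad residue forces $B(x) > xq^{-1}\log^{-C}x$, i.e.
\[
   q > \frac{x\log^{-C}x}{B(x)} \gg \log^{A-C}x.
\]
So there are no bad moduli at all once $q \ll \log^{A-C}x$; the content is entirely for larger $q$.

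For the range $\log^{A-C}x \le q \le x^{1/2}$ I would count bad moduli by a second summation, this time over $q$. The point is that a bad residue $h$ for $q$ singles out $\gg xq^{-1}\log^{-C}x$ elements of $B$ lying in a single class mod $q$; different bad moduli in a suitable dyadic block $Q < q \le 2Q$ pick out classes of roughly the same size, and the total number of (element, modulus) incidences with $n \le x$, $q \le x^{1/2}$, $q\mid$ (something pinning $n$ to its class) is controlled by $\sum_{q\le x^{1/2}} \sum_{n\le x, n\equiv h_q(q)} 1$, which over all $q$ is at most $\sum_{q \le x^{1/2}}(x/q + 1) \ll x\log x$. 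Comparing: if $N(Q)$ denotes the number of bad $q$ in $(Q,2Q]$, then $N(Q)\cdot xQ^{-1}\log^{-C}x$ is at most the number of incidences coming from that block, which is $\le \sum_{Q<q\le 2Q} B(x) \ll Q\cdot x\log^{-A}x$, giving $N(Q) \ll Q^2\log^{C-A}x$. Summing over $O(\log x)$ dyadic blocks $Q \le x^{1/2}$ yields at most $\ll x\log^{C-A+1}x = o(x^{1/2+\eps})$... — here I must be more careful, since I want ``almost all $q\le x^{1/2}$'', i.e. $o(x^{1/2})$ bad moduli, so it suffices to take $A$ large enough (any $A > C+2$ already beats the crude bound $x\log^{C-A+1}x$ against $x^{1/2}$, since the genuine saving in $B(x)$ is a power of $\log$, not of $x$). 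In fact the honest accounting gives far fewer bad moduli than $x^{1/2}$, so the ``almost all'' is comfortable.

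The main obstacle is the uniformity in $h$: the hypothesis on $B$ is a single global bound, with no control on how $B$ distributes among residue classes, so one cannot simply invoke a large sieve or Bombieri--Vinogradov-type input. The argument must therefore be a pure counting/pigeonhole argument on incidences between elements of $B$ and arithmetic progressions, and the delicate point is that an element $n\le x$ lies in many progressions $n\equiv h\,(q)$ simultaneously, so double counting over $q$ up to $x^{1/2}$ introduces the factor $\sum_{q\le x^{1/2}} x/q \ll x\log x$; one has to verify that this logarithmic loss is absorbed by the arbitrarily large power of $\log$ saved in $B(x)$, which is exactly why the hypothesis is required for \emph{every} $A>0$ rather than one fixed $A$. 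A secondary technical point is that the threshold $\log^{C-A}x$ must be handled so that the count of bad $q$ is $o(x^{1/2})$ uniformly, not merely finite; the dyadic decomposition above takes care of this, at the cost of choosing $A = A(C)$ suitably large from the outset.
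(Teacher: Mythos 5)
Your reduction for small $q$ is fine, but the core of your argument --- the double counting of incidences over a dyadic block --- does not prove the lemma, and the spot where you wave your hands (``the honest accounting gives far fewer bad moduli than $x^{1/2}$'') is exactly where it breaks. Your count gives $N(Q)\ll Q^2\log^{C-A}x$. For $Q\asymp x^{1/2}$ this is $\asymp x\log^{C-A}x$, which exceeds the total number $\asymp x^{1/2}$ of moduli in the block by the factor $x^{1/2}\log^{C-A}x$; no choice of $A$ can repair this because, as you yourself observe, the saving in $B(x)$ is only logarithmic while the deficit is a power of $x$. (Your assertion that $x\log^{C-A+1}x=o(x^{1/2+\eps})$ is simply false.) So the pure pigeonhole argument says nothing about moduli with, say, $x^{1/4}\le q\le x^{1/2}$, and the ``almost all $q\le x^{1/2}$'' conclusion is not reached.

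The missing ingredient is a genuine large-sieve input, which --- contrary to your closing remark --- does apply here even though the hypothesis on $B$ is a single global bound: the Montgomery-type inequality used in the paper (Theorems \ref{t1} and \ref{t2} of Section \ref{app:lem1}) is valid for an \emph{arbitrary} sequence $(a_n)$ and requires no distributional information about $B$. It bounds $\sum_{Q<q\le 2Q} q\max_h |Z(q,h)|^2$ by splitting the moduli according to whether $\tau(q)>H$ or $\tau(q)\le H$, handling the first class trivially (there are few such moduli) and the second via M\"obius inversion, Montgomery's identity and the classical large sieve. Applied to the indicator function of $B$ with the optimal choice $H=(x/B(x))^{1/2}$ this yields
\[
  \sum_{q\sim Q}\max_{h}B_{h,q}(x)\ll x^{3/4}B(x)^{1/4}\log^{1/2}Q,
\]
a bound independent of $Q$ which, unlike the trivial $QB(x)$, is $\ll x\log^{-2C-1}x$ once $A$ is taken large in terms of $C$; Chebyshev's inequality then gives $\#\mathcal{M}_Q\ll Q\log^{-C-1}x$ uniformly in $Q\le x^{1/2}$, and summing over the $O(\log x)$ dyadic blocks finishes the proof. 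Some quantitative gain of this kind over the trivial count is indispensable in the range $q$ near $x^{1/2}$, and your proposal contains no mechanism for producing it.
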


A proof is given in 
Section~\ref{app:lem1}.

 From this Lemma and \eqref{eq:cce} we know that for any $C>0$, and for almost all $q\leq x^{1/2}$
we have 
 $E_{h,q}(x)\ll xq^{-1}\log ^{-C}x$ unconditionally
for all $h$ mod $q$.

It follows immediately that for \emph  {almost all} modulii $q$ with
\\linebreak
$x^{1/2}\log ^{D}x\ll q\ll x^{1/2}(\log x)^{D}$, 
we have 
$$E_{h,q}(x)\ll x^{1/2}\log ^{C}x$$ 
for
all $h$ mod $q$, a  bound  known \emph {for all} modulii
under the assumption of the Riemann hypothesis.

Assuming RH, so that \eqref{eq:GBGol} is available,
we conclude  as in the proof of Lemma \ref{lem:l},
 that for almost all $q$ with 
  $x^{1/2}\ll q\leq x^{1/2}$
  we have $$E_{h,q}(x)\ll x^{7/8+\eps}/q\ll x^{3/8+\eps}$$ for all $h$
  mod $q$. 
(Since by \eqref{eq:ea2}, for the number $\#\mathcal{M}_{Q}$ of
exceptions in question,
$\#\mathcal{M}_{Q}\cdot x^{7/8+\eps}/Q\ll x^{3/4}E(x)^{1/4}(\log
x)\ll x^{7/8}\log^{2} x$, so $\#\mathcal{M}_{Q}=o(Q)$.)

Working with the best published unconditional bound \eqref{eq:lu}
 we can likewise deduce that
$$E_{h,q}(x)\ll x^{0.97+\eps}/q\ll x^{0.47+\eps}$$ for all $h$
  mod $q$ and almost all  $q$ with 
  $x^{1/2}\ll q\leq x^{1/2}$.

If we were working with the latest available bound \eqref{eq:pin} instead, 
we would deduce
\[ E_{h,q}(x)\ll x^{0.93+\eps}/q\ll x^{0.43+\eps}
\]
for all $h$ mod $q$ and almost all  $q$ with $x^{1/2}\ll q\leq x^{1/2}$.

%
\subsection{Mean Value}

In \cite{Gra}, Granville studied  the mean value of the Goldbach
representation number $G(n)$ in an arithmetic progression,
that is the sum
\begin{equation}
\label{eq:GBap}
   \sum_{\substack{n\leq x\\n\equiv c\:(q)}} G(n),
\end{equation}
in particular for the fixed residues $c=2$ and $c=0$ and 
stated
 the estimate
$\sum_{n\leq x, n\equiv 2\:(q)} F(n)\ll x^{3/2+o(1)}$.
We introduced
 the technically useful sum \cite{BHMS}
\[
S(x;q,a,b)= \sum_{n\leq x} \sum_{\substack{\ell+m=n\\\ell\equiv a,
    m\equiv b (q)}} \Lambda(\ell) \Lambda(m)
\]
a special case of which, $\sum_{a\:(q)} S(x;q,a,c-a)$ is  the above \eqref{eq:GBap},

and proved that, for $(ab,q)=1$, 
\[
S(x;q,a,b)= \frac{x^2}{2\phi(q)^2} +O(x^{1+B_q})
\]
where $B_q$ depends on the non-trivial zeros of the associated Dirichlet $L$-functions.
The oscillating term was also extracted but we do not discuss it here.


\subsection{Equivalence with  GRH-DZC}

Parallel to the equivalence of the RH and the asymptotic expansion of the classical Goldbach average,
it was believed that there is an equivalence between the RH for Dirichlet
$L$-functions $L(x,\chi)$ over all characters $\chi$ mod $m$ 
which are odd squarefree divisors of $q$ and the estimate
$\sum_{n\leq x, n\equiv 2\:(q)} F(n)\ll x^{3/2+o(1)}$   (\cite{Gra}, Thm.1B).

In \cite{BHMS} the same question was studied and the above claim  was recovered 
though partially, i.e.\ 
only in the case $b=a$, and additionally under the assumption of one more conjecture
on the non-trivial zeros of $L$-functions, called the Distinct Zero Conjecture 
(DZC), more precisely,
\begin{quote}
\textit{For any $q\geq 1$, any two distinct Dirichlet $L$-functions 
associated with characters of modulus $q$ 
do not have a common non-trivial zero, except for a possible 
multiple zero at $s=1/2$.}
\end{quote}

The equivalence obtained till now does not cover all residues $a$ and $b$.

\begin{theorem}[{from \cite[Thm.1]{BHMS}}]
\label{th:BHMS1}
Let {\upshape DZC} be true, $q$ be odd and $(a,q)=~1$.
Then, for any $\varepsilon>0$, the asymptotic formula
\begin{align}
\label{MainTheorem-1-formula}
S(x;q,a,a)=\frac{x^2}{2\varphi(q)^2}+O_{q,\varepsilon}(x^{3/2+\varepsilon})
\end{align}
is equivalent to the GRH for the functions $L(s,\chi)$ with any character $\chi$ mod $q$.
\end{theorem}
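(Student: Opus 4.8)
The plan is to prove the two implications separately, using the explicit formula for $S(x;q,a,a)$ and the companion oscillating term derived in \cite{BHMS}. The starting point is the identity expressing $S(x;q,a,a)$ via orthogonality of characters as an average over $\chi, \chi' \pmod q$ of twisted sums $\sum_{\ell+m=n}\Lambda(\ell)\chi(\ell)\Lambda(m)\chi'(m)$, which by the standard contour-shift / circle-method argument produces a main term $\frac{x^2}{2\varphi(q)^2}$ together with a double sum over pairs of nontrivial zeros $\rho_\chi, \rho_{\chi'}$ of the form $\sum_{\rho_\chi,\rho_{\chi'}} \frac{x^{\rho_\chi+\rho_{\chi'}}}{\rho_\chi \rho_{\chi'}(\rho_\chi+\rho_{\chi'})}$ (up to lower-order contributions), plus single-zero terms of size $O(x^{1+\Theta_q+\varepsilon})$ where $\Theta_q$ is the supremum of real parts of zeros of $L(s,\chi)$, $\chi \pmod q$. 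I would first record this explicit formula cleanly, since both directions rest on it.

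For the direction GRH $\Rightarrow$ \eqref{MainTheorem-1-formula}: under GRH every $\rho_\chi$ has real part $1/2$, so the single-zero terms are $O(x^{3/2+\varepsilon})$, and the double-zero diagonal-type sum has each term of modulus $\ll x^{2}/(|\rho_\chi|^2 |\rho_{\chi'}|) \cdot$(something), which after summing over zeros with the classical density estimate $N(T,\chi)\ll T\log(qT)$ is $O(x^{3/2}\log^{c}x)$; the off-diagonal interaction $\rho_\chi + \rho_{\chi'}$ never vanishes in the relevant range, so no secondary main term appears. This gives the asymptotic formula with the stated error. Here DZC is not needed.

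For the converse \eqref{MainTheorem-1-formula} $\Rightarrow$ GRH: assume \eqref{MainTheorem-1-formula} holds and suppose, for contradiction, that some $L(s,\chi_0)$, $\chi_0 \pmod q$, has a zero $\rho_0=\beta_0+i\gamma_0$ with $\beta_0 > 1/2$. The idea, following the classical Goldbach-RH equivalence reproduced in Section~\ref{app:RHGo}, is that the single-zero term attached to $\rho_0$ contributes a genuine oscillation of size $\asymp x^{1+\beta_0}$ to $S(x;q,a,a)$ that cannot be cancelled. The key analytic tool is a Landau-type / Tauberian argument: one shows that if the error term were $O(x^{3/2+\varepsilon})$ then the Dirichlet series $\sum_n (S(n;q,a,a)-S(n-1;q,a,a))n^{-s}$ (essentially $\sum_{a(q)} \big(\sum_\chi \bar\chi(a) (L'/L)(s,\chi)\big)^2$-type object) would be analytic in $\Re s > 3/2$, forcing $2\beta_0 \le 3/2$, i.e.\ $\beta_0 \le 3/4$ — not immediately the full RH. To upgrade to $\beta_0 \le 1/2$ one has to separate the contribution of a single character's zero from the diagonal pairing $\rho_0+\rho_0$: this is exactly where DZC enters. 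DZC guarantees that a zero $\rho_0$ of $L(s,\chi_0)$ with $\chi_0 \neq \chi_0^{-1}$, or more precisely that the pairing structure, does not get artificially doubled or cancelled by a coincident zero of a different $L$-function, so that the term $x^{1+\beta_0}\Re\big(c\, x^{i\gamma_0}\big)$ survives with a nonzero coefficient $c$ and can be isolated by an averaging (integration against $x^{-it}$) argument; its survival then forces $1+\beta_0 \le 3/2+\varepsilon$ for every $\varepsilon$, hence $\beta_0 \le 1/2$.

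The main obstacle is precisely this last isolation step. Without DZC, cancellation among zeros of different $L$-functions mod $q$ sharing the same imaginary part could in principle conspire to suppress the oscillation coming from a hypothetical off-line zero, so one cannot detect it from the size of $S(x;q,a,a)$ alone; the restriction $b=a$ is what makes the character sum $\sum_\chi \bar\chi(a)(L'/L)(s,\chi)$ appear \emph{squared} rather than as a product of two independent such sums, which is what ties the argument to the single modulus $q$ and lets DZC do its work. I would therefore spend most of the proof carefully justifying the passage from the assumed error bound to analyticity of the associated Dirichlet series in a half-plane, and then invoking DZC to convert the abscissa of convergence into a bound on individual zeros, mirroring the Bhowmik–Schlage-Puchta–Ruzsa treatment of the classical case.
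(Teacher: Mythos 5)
The paper itself does not prove Theorem~\ref{th:BHMS1}: it is quoted from \cite{BHMS}, and the only argument given here in detail is the $q=1$ template in Section~\ref{app:RHGo}. Measured against that template, your architecture (explicit formula via character orthogonality, passage to a generating Dirichlet series, detection of poles at $s=1+\rho_\chi$, DZC to prevent cancellation of residues) is the right one, and you correctly observe that DZC is not needed for the direction GRH $\Rightarrow$ \eqref{MainTheorem-1-formula}. You also correctly sense that the restriction $b=a$ is what keeps each single-zero term alive; concretely, the coefficient attached to a zero of $L(s,\chi)$ is proportional to $\overline{\chi}(a)+\overline{\chi}(b)$, which equals $2\overline{\chi}(a)\neq 0$ when $b=a$ but can vanish otherwise --- this is sharper than your ``squared versus product of two independent sums'' heuristic.

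There are, however, two genuine gaps. First, the intermediate step ``analyticity in $\Re s>3/2$ forces $2\beta_0\le 3/2$, i.e.\ $\beta_0\le 3/4$, and DZC upgrades this to $\beta_0\le 1/2$'' is not how the argument runs and cannot be repaired as stated: the single-zero terms produce poles of the generating function at $s=1+\rho_\chi$, not at $s=2\rho_\chi$, so once the residue at $1+\rho_0$ is known to be nonzero the abscissa argument yields $1+\beta_0\le 3/2$ directly. The diagonal pairing $\rho_0+\overline{\rho_0}$ lives in the double-zero error term and has nothing to do with DZC; the sole function of DZC is to guarantee that if several $L$-functions mod $q$ were to share the zero $\rho_0$, their contributions to the residue at $1+\rho_0$ could not cancel. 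Second, and more seriously, you never address the analogue of Step 5 of Section~\ref{app:RHGo}: the explicit formula only gives a meromorphic continuation of the generating function to $\sigma>2B_q$, because the double-zero terms obstruct going further left, so the pole at $1+\rho_0$ is visible only when $1+B_q>2B_q$, i.e.\ $B_q<1$. In the classical case excluding $B=1$ required the separate Bhowmik--Ruzsa power-series/circle-method argument; a complete proof of Theorem~\ref{th:BHMS1} must supply the corresponding step for the $L(s,\chi)$, and your sketch is silent on it.
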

Continuing in the direction of possible equivalences  
a bit more can be said using the notation
$B_{\chi}=\sup\{\Re \rho_{\chi}\}$ and $B_q=\sup\{B_{\chi}\mid\chi\:(q)\}$
where $\rho_{\chi}$ are the non-trivial zeros of $L(s, \chi)$.

\begin{theorem}[{from \cite[Thm.3]{BHMS}}]
\label{th:BHMS2}
  Let $q,c$ be integers with $(2,q)\mid c$.

Assuming the GRH, we have
\begin{equation}
\label{thmsiegel_asymp}
\sum_{\substack{n\leq x\\n\equiv c\,(q)}}F(n)\ll x^{3/2}.
\end{equation}

On the other hand if we assume that
\begin{equation}
\label{eq:thmsiegel}
\sum_{\substack{n\leq x\\n\equiv c\:(q)}}F(n)\ll_{q,\varepsilon}
x^{3/2+\varepsilon}
\end{equation}
holds for any $\varepsilon>0$ and
that there exists a zero $\rho_0$ of $\prod_{\chi\:(q)}L(s,\chi)$ such that
\begin{enumerate}[(a)]
\item $B_q=\Re\rho_0$
\item $\rho_0$ belongs to a unique character $\chi_1\ (q)$
\item the conductor $q^\ast$ of $\chi_1\ (q)$ is squarefree and
  satisfies $(c,q^\ast)=1$,
\end{enumerate}
then $B_q=\Re \rho_0\le 1/2$.
\end{theorem}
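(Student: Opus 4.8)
The plan is to treat the two implications separately; the first is a short deduction from the explicit formula for $S(x;q,a,b)$ recorded above, while the second is the substantive direction.

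\emph{The forward implication.} Assuming the GRH, I would write $\sum_{n\le x,\,n\equiv c\,(q)}G(n)=\sum_{a\,(q)}S(x;q,a,c-a)$ and insert the asymptotic formula for $S(x;q,a,b)$ together with its (previously extracted) oscillating term. Since $(2,q)\mid c$, the residues $a$ with $(a(c-a),q)=1$ each contribute the main term $\tfrac{x^2}{2\phi(q)^2}$, the prime powers dividing $q$ contribute $O(x\log^2 x)$, the oscillating term has the shape $\tfrac1{\phi(q)}\sum_{\chi\,(q)}w_\chi(c)\sum_{\rho_\chi}\tfrac{x^{\rho_\chi+1}}{\rho_\chi(\rho_\chi+1)}$ for certain weights $w_\chi(c)$, and the error term is $O(x^{1+B_q})$. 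Under the GRH, $B_q=\h$, so the error is $O(x^{3/2})$ and the oscillating term is $\ll x^{3/2}\sum_{\rho}|\rho|^{-2}\ll x^{3/2}$. Subtracting $\sum_{n\le x,\,n\equiv c\,(q)}J(n)$, whose smooth main term cancels the $x^2$-term and whose error is $O(x\log^A x)$, then yields \eqref{thmsiegel_asymp}.

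\emph{The converse.} Suppose \eqref{eq:thmsiegel} holds and that a zero $\rho_0$ satisfying (a)--(c) exists; it suffices to rule out $\Re\rho_0>\h$, so assume this. I would study the Dirichlet series $\mathcal F(s):=\sum_{n\equiv c\,(q)}F(n)n^{-s}$, which converges for $\Re s>2$. By partial summation, $\int_1^\infty\big(\sum_{n\le t,\,n\equiv c\,(q)}F(n)\big)t^{-s-1}\,dt=s^{-1}\mathcal F(s)$, so \eqref{eq:thmsiegel} forces $\mathcal F(s)$ to be holomorphic in $\Re s>3/2$. On the other hand $\mathcal F(s)$ admits an \emph{unconditional} meromorphic continuation past $\Re s=2$: starting from the identity expressing $\sum_{n\equiv c\,(q)}G(n)e^{-n/X}$ as the sum over $a\pmod q$ of the products $\big(\sum_{m\equiv a\,(q)}\Lambda(m)e^{-m/X}\big)\big(\sum_{m\equiv c-a\,(q)}\Lambda(m)e^{-m/X}\big)$, expanding each inner sum via the explicit formula for $L(s,\chi)$ — a main term $X/\phi(q)$ when the progression is coprime to $q$, then $-\tfrac1{\phi(q)}\sum_\chi\bar\chi(a)\sum_{\rho_\chi}\Gamma(\rho_\chi)X^{\rho_\chi}$, then $O(1)$ — and Mellin-inverting, one finds $\mathcal F(s)$ meromorphic with: a pole at $s=2$, cancelled by that of $\sum_{n\equiv c\,(q)}J(n)n^{-s}=\zeta(s-1)H(s)$; simple poles at the points $s=\rho_\chi+1$ with residue proportional to $w_\chi(c)/\rho_\chi$ times the multiplicity of $\rho_\chi$; and poles at $s=\rho_\chi+\rho_{\chi'}$ of real part at most $2B_q$. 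Now (a) guarantees that, apart from the cancelled point $s=2$, the rightmost of these poles is $s=\rho_0+1$ (note $2B_q<B_q+1$ since $B_q<1$); (b) guarantees that no character other than $\chi_1$ vanishes at $\rho_0$, so the polar part at $s=\rho_0+1$ suffers no cancellation; and (c) — $q^\ast$ squarefree with $(c,q^\ast)=1$ — makes the Ramanujan-sum-type weight $w_{\chi_1}(c)$ nonzero, so the residue at $s=\rho_0+1$ is nonzero. Hence $\mathcal F(s)$ has a genuine pole at a point with real part $B_q+1>3/2$, contradicting the holomorphy just deduced. Therefore $\Re\rho_0\le\h$.

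\emph{The main obstacle.} The delicate step is the unconditional meromorphic continuation of $\mathcal F(s)$ into the strip $3/2<\Re s\le 2$ and, above all, the verification that the pole at $s=\rho_0+1$ is genuine rather than removable: one must show that, after the Mellin transform, the double sum over pairs of zeros contributes only poles of real part $\le 2B_q$ (in particular nothing at $s=\rho_0+1$), that the various sums over zeros converge to meromorphic functions (using the exponential decay of $\Gamma(\rho_\chi)$ along vertical lines and the zero-count for $L(s,\chi)$), and that the single-zero contribution really produces a nonvanishing residue there. This is exactly what conditions (a)--(c) are designed to secure, playing locally the role that the Distinct Zero Conjecture plays globally in Theorem \ref{th:BHMS1}; it is also the point at which the argument parallels, and relies on, the completion of the classical case in \cite{BhRu}.
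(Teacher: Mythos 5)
The paper itself does not prove Theorem~\ref{th:BHMS2}: it only quotes the statement from \cite{BHMS}, and the only full argument of this type it contains is the $q=1$ analogue in Section~\ref{app:RHGo}. Measured against that template (and against what \cite{BHMS} actually does), your reconstruction is essentially the intended one: the forward direction is the explicit formula for $\sum_{a}S(x;q,a,c-a)$ specialised under GRH, and the converse is a Landau-type singularity argument in which (a) locates the rightmost candidate singularity (other than $s=2$) at $s=1+\rho_0$, (b) prevents cancellation of its polar part between different characters, and (c) makes the Ramanujan-sum-type weight attached to $\chi_1$ nonzero.

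One step you assert without justification deserves to be made explicit, because in the classical case it is the hardest part of the whole story: you need $B_q<1$ in order that the zero--zero contribution (holomorphic for $\sigma>2B_q$) not reach the pole at $\sigma=1+B_q$, and you simply write ``since $B_q<1$''. For $q=1$ this is precisely the obstruction that forces the separate power-series/Cauchy-formula argument of Step~5 in Section~\ref{app:RHGo} (the Bhowmik--Ruzsa completion, which yields $B<11/12$); it is \emph{not} free. In the present theorem it does come for free, but only because of hypothesis (a): the supremum $B_q$ is assumed to be \emph{attained} at an actual zero $\rho_0$, and since $L(s,\chi)$ has no zeros on $\sigma=1$, this gives $B_q=\Re\rho_0<1$. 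Spelling this out both closes the gap and explains why hypothesis (a) is needed and why no analogue of Step~5 appears here. With that point added, and with the convergence of the sums over zeros handled as you indicate (via $\sum_{\rho}|\rho|^{-2}<\infty$), your outline matches the proof this theorem is drawn from.
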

Thus the GRH can be deduced only under several additional assumptions (a),(b) and (c),
from the asymptotic expansion \eqref{eq:thmsiegel}.

It is worth mentioning that  from  \eqref{thmsiegel_asymp} above, it
is immediate that \linebreak
$\sum_{n\leq x}F(n)\ll x^{3/2}$ when $q=1$ which in turn implies that under the RH
 we get the bound $E(x)\ll x^{1/2}\log x$. The last is an  improvement on Goldston's result \eqref{eq:GBGol}.


\subsection{Bombieri--Vinogradov Theorem and Siegel Zeros}
\label{sec:SZe}

Under the assumption that the
GRH can be deduced
for all $L(s,\chi)$ for any $\chi$ mod $q$ from asymptotic expansions 
we would have  a very short
proof of Bombieri--Vinogadov's theorem. Let us dwell on this possibility.

Let $D_{q}(x):=\sum_{\substack{n\leq x \\ n\equiv 2(q)}} (G(n)-J(n))$
and $\Delta_{q}(x):= \max_{a(q)}^{*}
|\psi(x;q,a)-\frac{x}{\varphi(q)}|$,
where trivially $\Delta_{q}(x)\ll xQ^{-1}\log ^{2}x$.
Let
\[
   E:=\{q\in\N;\ Q<q<2Q,\ |D_{q}(x)|\geq x^{3/2+\delta} \text{ for some
   }\delta>0 \},
\]
so that $q\not\in E \implies D_{q}(x)\ll x^{3/2+o(1)}$. From our assumption
this would imply GRH for $L(s,\chi)$ with any $\chi$ mod $q$.
Then $\Delta_{q}(x)\ll x^{1/2}\log ^{2}x$.

Hence
\[
   \# E\, x^{3/2+\delta} \leq \sum_{q\in E} |D_{q}(x)|,
\]
and by Lemma \ref{lem:bv} below we would have
\[
\sum_{q\in E} |D_{q}(x)| \leq \Big( \sum_{n\leq x} |G(n)-J(n)|^{2}\Big)^{1/2}
x^{1/2} \log ^{3/2}x\ll x^{2} \log ^{7/2} x
\]
by trivially estimating the last sum as $x^{3}\log ^{4}x$.
From this, we could deduce that $\#E \ll x^{1/2-\delta}\log ^{4} x $.

For the left hand side of the Bombieri--Vinogradov's theorem this yields
\begin{align*}
  \sum_{Q<q\leq 2Q} \Delta_{q}(x) 
  &=\sum_{q\in E} \Delta_{q}(x) + \sum_{q\not\in E} \Delta_{q}(x) \\
  &\ll x^{1/2-\delta} xQ^{-1}\log ^{6}x + Qx^{1/2}\log ^{2}x \ll
  x\log ^{-A}x
\end{align*}
if we assume that $x^{1/2-\delta}\log ^{6+A}x\ll Q\ll x^{1/2}\log
^{-2-A}x$. 

The elementary lemma used just above is  :
\begin{lemma}
  \label{lem:bv}
  For any sequence $(v_{n})_{n\in \N}$ of complex numbers 
and any integer $a$ with $a<x$ we have
\[
   \sum_{Q<q\leq 2Q} \Big|\sum_{\substack{n\leq x\\n\equiv a(q)}} v_{n}\Big|
   \leq \Big( \sum_{n\leq x} |v_{n}|^{2}\Big)^{1/2} x^{1/2} \log ^{3/2}x.
\]
\end{lemma}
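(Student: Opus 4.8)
The plan is to prove the estimate
\[
   \sum_{Q<q\leq 2Q} \Big|\sum_{\substack{n\leq x\\n\equiv a(q)}} v_{n}\Big|
   \leq \Big( \sum_{n\leq x} |v_{n}|^{2}\Big)^{1/2} x^{1/2} \log ^{3/2}x
\]
by a two-fold application of Cauchy--Schwarz, with a large-sieve-type inequality doing the real work in between. First I would apply Cauchy--Schwarz to the outer sum over $q$: writing each term as $1\cdot\big|\sum_{n\equiv a(q)}v_n\big|$, we get
\[
   \sum_{Q<q\leq 2Q} \Big|\sum_{\substack{n\leq x\\n\equiv a(q)}} v_{n}\Big|
   \leq (\#\{q: Q<q\leq 2Q\})^{1/2}\Big(\sum_{Q<q\leq 2Q}\Big|\sum_{\substack{n\leq x\\n\equiv a(q)}} v_{n}\Big|^{2}\Big)^{1/2}
   \leq Q^{1/2}\Big(\sum_{Q<q\leq 2Q}\Big|\sum_{\substack{n\leq x\\n\equiv a(q)}} v_{n}\Big|^{2}\Big)^{1/2}.
\]
So it suffices to show the inner mean-square bound
\[
   \sum_{Q<q\leq 2Q}\Big|\sum_{\substack{n\leq x\\n\equiv a(q)}} v_{n}\Big|^{2}\ll \Big(\sum_{n\leq x}|v_n|^2\Big)\,\frac{x\log^{3}x}{Q},
\]
since then multiplying by $Q$ and taking square roots recovers the claim (absorbing the implied constant into the $\log^{3/2}x$ with room to spare for all large $x$).

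For the mean-square bound I would expand the square, writing $\big|\sum_{n\equiv a(q)}v_n\big|^2=\sum_{m\equiv a(q)}\sum_{n\equiv a(q)}v_m\overline{v_n}$, and swap the order of summation so that the sum over $q$ is innermost. The condition $m\equiv n\equiv a\ (q)$ forces $q\mid m-n$; for the diagonal terms $m=n$ one gets a contribution $\ll Q\sum_{n\leq x}|v_n|^2$, which after dividing by $Q$ is harmless. For the off-diagonal terms $m\neq n$, the number of $q\in(Q,2Q]$ dividing the nonzero integer $m-n$ (with $|m-n|<x$) is at most $d(|m-n|)\ll x^{o(1)}$ by the divisor bound; combined with $|v_m\overline{v_n}|\leq\frac12(|v_m|^2+|v_n|^2)$ and summing, this gives roughly $\ll x^{o(1)}\cdot x\sum_n|v_n|^2$ — which is slightly too lossy to land the clean $\log^{3}x$. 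A cleaner route, and the one I would actually push, is to pass to additive characters: detect $n\equiv a\ (q)$ via $\frac1q\sum_{\chi\ \mathrm{mod}\ q}\ \cdots$ or directly via $\frac1q\sum_{h\ \mathrm{mod}\ q}e\big(h(n-a)/q\big)$, so that $\sum_{n\equiv a(q)}v_n=\frac1q\sum_{h\,(q)}e(-ha/q)\sum_{n\leq x}v_n e(hn/q)$; then Cauchy--Schwarz in $h$ and the classical large sieve inequality $\sum_{q\leq Q}\sum_{h\,(q)}^{*}\big|\sum_{n\leq x}v_ne(hn/q)\big|^2\ll (x+Q^2)\sum_n|v_n|^2$ yield the mean-square bound with the required power of the logarithm coming from the harmonic sum $\sum_{Q<q\le 2Q}1/q\ll 1$ and from organizing the Farey fractions — here the precise bookkeeping of the $1/q$ and $1/q^2$ weights against the large-sieve output is what produces exactly $\log^3 x$ rather than a larger power.

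The main obstacle is the bookkeeping in that last step: getting the logarithmic power to be exactly $3$ (so that, after the outer Cauchy--Schwarz, the final bound reads $x^{1/2}\log^{3/2}x$ with no $\varepsilon$ and no extra constant floating around). The large sieve gives $x+Q^2$, and in the regime where this lemma is applied one has $Q\asymp x^{1/2}$, so $x+Q^2\ll x$; the weights $1/q^2$ over the dyadic range contribute another $1/Q$-type saving that has to be tracked carefully so as not to over- or under-count. An alternative is to avoid the large sieve entirely and run the divisor-bound argument above but split $m-n$ by its size dyadically and use $\sum_{k\leq x}d(k)\ll x\log x$ rather than the pointwise divisor bound; this gives $\ll x\log x\cdot\sum_n|v_n|^2$ for the off-diagonal, hence $\ll \log x\cdot\sum_n|v_n|^2\cdot x/Q$ after dividing by $Q$ — and then one is comfortably inside $\log^{3}x$. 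Either way the structure is: Cauchy--Schwarz in $q$, expand the square, separate diagonal from off-diagonal, bound the off-diagonal by a divisor sum (or by the large sieve), reassemble, and take square roots. I would present the divisor-sum version since it keeps the proof genuinely elementary, matching the description in the text of this as an "elementary lemma."
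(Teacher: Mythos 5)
There is a genuine gap, and it occurs at the very first step: applying Cauchy--Schwarz to the outer sum over $q$ costs a factor $Q^{1/2}$ that your argument never recovers. After that step you need the mean-square bound $\sum_{Q<q\leq 2Q}\bigl|\sum_{n\equiv a(q)}v_n\bigr|^2\ll \bigl(\sum_n|v_n|^2\bigr)x\log^3x/Q$, and neither of your two routes delivers the crucial factor $1/Q$. In the divisor-sum version, once you bound the number of admissible $q$ by $d(|m-n|)$ and sum $\sum_{k\le x}d(k)\ll x\log x$, you have the off-diagonal bound $x\log x\sum_n|v_n|^2$ with no $Q$ in sight; the subsequent ``dividing by $Q$'' has no justification (you already discarded the constraint $q>Q$ when passing to $d(|m-n|)$). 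Combined with the outer Cauchy--Schwarz this yields only $\|v\|\,(Qx\log x)^{1/2}$, worse than the target by roughly $Q^{1/2}$. The large-sieve version fails for the same reason: the Cauchy--Schwarz over $h\bmod q$ in $\frac1q\sum_{h(q)}e(-ha/q)T(h/q)$ is lossy by a factor $q$ exactly on the term $T(0)$ (test it with $v_n\equiv 1$, where the truth is $x^2/Q$ but your bound gives $\asymp x^2$), so you again end up with $\asymp(x+Q^2)\sum_n|v_n|^2$ without the $1/Q$. If one instead keeps the constraint $q\in(Q,2Q]$ throughout, the off-diagonal is controlled by $\max_{n}\,\#\{q\in(Q,2Q]:q\mid n-a\}$, which is only $x^{o(1)}$, not a fixed power of $\log x$ --- so this direction can at best recover $x^{1/2+o(1)}$, not the stated $x^{1/2}\log^{3/2}x$.

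The paper avoids all of this with a single Cauchy--Schwarz aimed in the other direction (the Hal\'asz--Montgomery inequality). Writing $\varphi_q(n)$ for the indicator of $n\equiv a\ (q)$, one has
\[
\sum_{Q<q\leq 2Q}|\langle v,\varphi_q\rangle|
=\Bigl\langle v,\sum_q\overline{\eps_q}\,\varphi_q\Bigr\rangle
\leq\Bigl(\sum_{n\leq x}|v_n|^2\Bigr)^{1/2}\Bigl(\sum_{q_1,q_2}\langle\varphi_{q_1},\varphi_{q_2}\rangle\Bigr)^{1/2},
\]
with unimodular $\eps_q$, and the Gram sum is
\[
\sum_{q_1,q_2}\langle\varphi_{q_1},\varphi_{q_2}\rangle
=\sum_{n\leq x}\bigl(\#\{q\in(Q,2Q]:q\mid n-a\}\bigr)^2
\leq\sum_{0<s\leq x}\tau(s)^2\ll x\log^3x .
\]
This is where the exponent $3/2$ really comes from: the \emph{average} of $\tau(s)^2$ is $\log^3x$, even though its pointwise maximum is not $O(\log^{A}x)$ for any $A$. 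The structural point to take away is that the exceptional-modulus vectors $\varphi_q$ are nearly orthogonal, and the duality inequality exploits this on the $\ell^1$ sum directly; an outer Cauchy--Schwarz over $q$ destroys that structure before it can be used. Your instinct to keep the proof elementary is fine --- the Hal\'asz--Montgomery step is itself just Cauchy--Schwarz plus the elementary estimate $\sum_{s\le x}\tau(s)^2\ll x\log^3x$ --- but it must be applied to the inner product, not to the sum over moduli.
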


\begin{proof}
  The left hand side is equal to the sum $\sum_{Q<q\leq 2Q} |\langle
  v,\varphi_{q}\rangle|$ with $\varphi_{q}(n)=1$ if $n\equiv a$ mod
  $q$ and $\varphi_{q}(n)=0$ otherwise. By Halasz--Montgomery's
  inequality, this is 
\[
   \leq \Big(\sum_{n\leq x} |v_{n}|^{2}\Big)^{1/2} \Big(\sum_{q_{1},q_{2}} \langle
   \varphi_{q_{1}},\varphi_{q_{2}}\rangle \Big)^{1/2}
\]
with
\[
\langle\varphi_{q_{1}},\varphi_{q_{2}}\rangle = \sum_{n\leq x}
\sum_{q_{1}\mid n-a} \sum_{q_{2}\mid n-a} 1 = \sum_{s, 0<x-a\leq s\leq
2x-a } \sum_{q_{1}\mid s} \sum_{q_{2}\mid s} 1 \ll x \log ^{3}x.
\]
\end{proof}


Concerning the Siegel zeros,  Fei in \cite{fei} has studied a similar question.
Assuming 
a certain version of  
\emph{weak} Goldbach conjecture, namely 
if for all even $n>2$,
\[
  G(n)\geq \frac{\delta n}{\log^{2} n},
\]
then the possible Siegel zero $\beta$ for $\chi$ mod $q$, where 
$q$ is a prime, $q\equiv 3$ mod $4$ and satisfies 
\[
  \beta\leq 1-\frac{c}{\log^{2} q}
\]
for some constant $c>0$.
Thus here we get a repulsion of the Siegel zero from the line $\Re s=1$ while 
assuming \eqref{thmsiegel_asymp} 
 on average 
we deduce $B_{q}\leq 1/2$ under (a), (b) and (c), which is a
repulsion of all the nontrivial zeros of $L$-functions mod $q$
from the line $\Re s=1$.

\newcommand{\du}{\,\textup{d}u}
\section{
Proof of equivalence of RH and 
Goldbach average } 
\label{app:RHGo}

In this section, we give an overview of the proof of Theorem
\ref{th:AG1A}, that the Riemann Hypothesis  is equivalent to the estimate 
\begin{equation}
\label{eq:f}
\sum_{n\leq x} (G(n)-J(n)) \ll_{\eps} x^{3/2+\eps}
\end{equation}
for any $\eps>0$.

Since $\sum_{n\leq x}J(n)-x^{2}/2\ll x \log x$,
we can write  \eqref{eq:f} equivalently  as
\begin{equation}
  \label{eq:ff}
  \sum_{n\leq x} G(n) =\frac{x^{2}}{2}+O(x^{3/2+\eps})
\end{equation}
for any $\eps>0$.

We concentrate on the proof of the deduction of the RH from
 \eqref{eq:f} 
that is to obtain $B=1/2$ for
$B:=\sup \{\Re \rho;\ \zeta(\rho)=0\}$.

Step 1. Let
$S(x):=\sum_{n\leq x} G(n)$ be the summatory function of $G(n)$.
A key issue is the proof of the asymptotic formula
\[
   S(x)=x^{2}/2+\sum_{\rho} r(\rho)
   \frac{x^{\rho+1}}{\rho+1} + E(x)
\]
with $E(x)\ll x^{2B}\log ^{5} 2x$
and $r(\rho):=-2/\rho$
( Theorem 2 in \cite{BHMS}). This involves a careful
transformation of the problem to an exponential sum setting where
Gallagher's lemma can be used, confer \cite[Lemma 9]{BHMS}.

Step 2. 
We define the Goldbach generating Dirichlet series as
\[
F(s)=\sum_{n=1}^{\infty}\frac{G(n)}{n^s}.
\]
This can be computed by using $S(x)$ in the integral
\[
   F(s)=s\int_{1}^{\infty} S(u)u^{-s-1} \du,
\]
where inserting the formula from Step 1 yields
\begin{align}
  F(s)
&=\frac{s}{2(s-2)}+\sum_{\rho}\frac{r(\rho)s}{(\rho+1)(s-\rho-1)}
+s\int_1^\infty E(u)u^{-s-1} \du \notag\\
&=\frac{1}{s-2}+\sum_{\rho}\frac{r(\rho)}{s-\rho-1}
+s\int_1^\infty E(u)u^{-s-1}\du+C_1, \label{eq:fs}
\end{align}
with
\[
C_{1}=\frac{1}{2}+\sum_{\rho}  \frac{r(\rho)}{\rho+1}
\]
and $r(\rho)=-2/\rho$.

From the above we can read off that for $\sigma>2$, the function $F(s)$
converges absolutely and is {\it analytic}.

Moreover $F(s)$ can be {\em continued meromorphically}
to the half plane $\sigma>2B$ since $E(u)\ll u^{2B}\log^{5} (2u)$.

Step 3. Assume $B<1$. Then we have
\begin{equation}
\label{eq:FB}
 1+B=\inf\{\sigma_0\ge\frac{3}{2} \mid
F(s)-\frac{1}{s-2}\text{ is analytic on }
  \sigma>\sigma_0\}.
\end{equation}
 Step 2 shows that the infimum on the right is at most $2B\leq B+1$, 

For the inequality in the other sense we observe that this is trivially true for $B=1/2$,
so we may assume that $1/2<B<1$.  

Now $\max(2B,3/2)<1+B$ being a strict inequality, there exists a
real number $\eps>0$ such that $\max(2B,3/2)<1+B-\eps$ holds true.
Then by the definition of $B$, there exists a zero $\rho$ such that
$1/2<B-\eps<\Re\rho$.

From the formula for $F(s)$ from Step 2, the function
 has a pole at $\rho+1$ with
residue $r(\rho)=-2/\rho\neq 0$ in the half plane
$\sigma>1+B-\eps>3/2$, and we conclude that
\[
 1+B-\eps\leq\inf\{\sigma_0\ge\frac{3}{2}\mid
F(s)-\frac{1}{s-2}\text{ is analytic on }
  \sigma>\sigma_0\}.
\]
Letting $\eps\to 0$, we obtain  the desired goal.

Step 4.
Now let $D(x)=\sum_{n\leq x} G(n) -\frac{x^{2}}{2}$,
and we have $D(x)\ll x^{3/2+\eps}$ from \eqref{eq:ff}. 
Hence, as in the proof of \eqref{eq:fs},
\[
F(s)-\frac{1}{s-2}
=
s\int_1^{\infty}D(u)u^{-s-1}\du+\frac{1}{2}
\]
for $\sigma>2$,
where the right-hand side gives an analytic function on $\sigma>3/2$ since
$D(u)\ll u^{3/2+\eps}$ from \eqref{eq:ff}.

Therefore, by \eqref{eq:FB} from Step 3,
we conclude that $B\leq 1/2$ provided that $B<1$, hence RH.

Step 5.
We now need to exclude the possibility that  \eqref{eq:ff} could imply $B=1$ 
(see \cite{BhRu}).
For this, let $|z|<1$ and consider the power series
\[
   f(z)=\sum_{n\geq 1} \Lambda(n)z^{n} \text{ and }
   f^{2}(z)=\sum_{n\geq 1} G(n)z^{n},
\]
so that 
\[
  \frac{1}{1-z}f^{2}(z)=\sum_{n\geq 1} S(n)z^{n},
\] 
again with the summatory function
$S(x)=\sum_{m\leq x}G(m)$ of the Goldbach function $G(m)$.
Then
\[
    \frac{1}{1-z}f^{2}(z)=\frac{1}{(1-z)^{3}}+O(N^{5/2+\eps})
\]
on the circle $|z|=e^{-1/N}$, which can be reformulated as
\[
    f^{2}(z)=\frac{1}{(1-z)^{2}}+O(|1-z|N^{5/2+\eps}).
\]

This yields an asymptotic formula
on the major arc $|1-z|\leq cN^{-C/3}$.
Taking the complex {\emph square root} yields 
\[
   f(z)= \pm \frac{1}{1-z}+O(|1-z|^{2}N^{5/2+\eps}).
\]
Due to continuity and non-negativity of the coefficients of $f(z)$,
we have the plus sign throughout the whole major arc.

\newcommand{\dz}{\,\textup{d}z}
Now by Cauchy's integral formula, we obtain
\[
\psi(N)
=
\frac{1}{2\pi i} \int_{|z|=R} f(z) K(z)\dz,\quad
N
=
\int_{|z|=R}\frac{1}{1-z}K(z)\dz
\]
for the kernel
\[
K(z)=z^{-N-1}+z^{-N}+\dots+z^{-2}=z^{-N-1}\frac{1-z^{N}}{1-z}.
\]

The contribution of $f(z)$ 
to this integral is $O(N^{5/6})$ on the
major arc, and only 
$O(N^{11/12+\eps})$
for the minor arc (which needs a little care to
prove).

Comparing this with the explicit formula for $\psi(N)$, we conclude
that $B<11/12<1$.

\section{
Proof of Lemma 
on the number of exceptions in progressions}
\label{app:lem1}
For a positive integer $N$, for $a_{1},\dots,a_{N}\in \C$ 
and a residue $h$ mod $q$ denote
\[
   Z:=\sum_{n\leq N}a_{n} \text{ and } Z(q,h):=\sum_{\substack{n\leq
       N\\n\equiv h\ (q)}}a_{n}.
\]

We start with the proof of the following
\begin{theorem}
  \label{t2}
 For any real $H>0$ and $Q>1$ we have
   \begin{align*}
      \sum_{Q<m\leq 2Q} m &\max_{h\text{ \emph{mod} }m} 
       |Z(m,h)|^{2} \\ &\leq (N^{2}+Q^{2})
      \frac{\log Q}{H} \max_{n\leq N} |a_{n}|^{2}
      + (N+Q^{2})H\log Q \sum_{n\leq N}|a_{n}|^{2}.
   \end{align*}
\end{theorem}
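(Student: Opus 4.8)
The plan is to detect the congruence condition $n \equiv h \ (m)$ with additive characters and then exploit orthogonality, following the classical large-sieve-type approach but keeping track of the uniformity in $h$ via a smoothing device over $H$ consecutive shifts. First I would write, for each $m$ with $Q < m \le 2Q$ and the optimal residue $h = h(m)$,
\[
Z(m,h) = \sum_{n \le N} a_n \frac{1}{m}\sum_{r=0}^{m-1} e\!\left(\frac{r(n-h)}{m}\right)
= \frac{1}{m}\sum_{r=0}^{m-1} e\!\left(\frac{-rh}{m}\right) \widehat{a}\!\left(\frac{r}{m}\right),
\]
where $\widehat{a}(\alpha) = \sum_{n\le N} a_n e(n\alpha)$. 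Splitting off the term $r=0$, which contributes $Z/m$, and bounding $|e(-rh/m)| = 1$, one gets
\[
m|Z(m,h)|^2 \ll \frac{|Z|^2}{m} + \frac{1}{m}\Bigl(\sum_{r=1}^{m-1}\bigl|\widehat{a}(r/m)\bigr|\Bigr)^2.
\]
Summing over $Q < m \le 2Q$, the $|Z|^2/m$ terms give $\ll Q^{-1}\log Q \cdot |Z|^2 \ll \log Q \cdot \max_n|a_n|^2 \cdot N^2 / Q$, which is absorbed by the first term on the right-hand side of the theorem. The real content is the bound for $\sum_m m^{-1}\bigl(\sum_{r} |\widehat{a}(r/m)|\bigr)^2$.

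Next I would introduce the averaging over $H$: the idea is that $|\widehat{a}(\alpha)|$ does not change much over an interval of length $H/N$, so one can replace a single value by an $L^1$-average, or conversely use a Gallagher-type inequality. Concretely, I would use the well-known estimate (Gallagher's lemma, or Montgomery's $L^2$ mean-value machinery) stating that for a $\delta$-spaced set of points $\{\alpha_j\}$ in $\R/\Z$,
\[
\sum_j |\widehat{a}(\alpha_j)|^2 \ll \Bigl(\frac{1}{\delta} + N\Bigr) \sum_{n\le N}|a_n|^2 ,
\]
applied with the Farey fractions $r/m$, $Q < m \le 2Q$, $1 \le r < m$, which are $\gg Q^{-2}$-spaced; this produces the $(N + Q^2)\sum_n |a_n|^2$ shape. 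The parameter $H$ enters when converting the $\ell^1$-sum $\sum_r |\widehat a(r/m)|$ into an $\ell^2$-sum via Cauchy--Schwarz with a weight that is where the $\log Q / H$ versus $H \log Q$ dichotomy comes from: roughly, one writes $\sum_r |\widehat a(r/m)| \le (\#\{r\})^{1/2}(\sum_r |\widehat a(r/m)|^2)^{1/2}$ but splits the range of $r$ according to whether $r/m$ lies within $H/N$ of $0$ (where $|\widehat a|$ can be as large as $N\max|a_n|$, giving the $\max|a_n|^2$ term with a factor controlled by $H$) or not (where the spacing argument and the $\ell^2$ mean value apply). Balancing the diagonal contribution near $0$ against the off-diagonal large-sieve contribution yields exactly the stated two terms.

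The main obstacle I anticipate is the bookkeeping near the point $\alpha = 0$: the character sum $\widehat a(r/m)$ for small $r/m$ is not controlled by the large sieve (the points are too close together relative to $N$), so one must treat this "major arc" separately and show its contribution is at most $(N^2 + Q^2)(\log Q / H)\max_n |a_n|^2$. This requires carefully counting how many pairs $(m,r)$ with $Q < m \le 2Q$, $1 \le r < m$ satisfy $\|r/m\| \le H/N$ — the count is $\ll HQ^2/N + 1$ fractions per relevant scale, times the trivial bound $|\widehat a(r/m)| \le N \max_n |a_n|$ only after a further dyadic decomposition of $\|r/m\|$ that recovers the harmonic factor $\log Q$ and the single power of $H^{-1}$ rather than $H^{-1}$ times something larger. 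Everything else — the orthogonality expansion, Cauchy--Schwarz, and the invocation of the spacing/large-sieve bound for the remaining fractions — is routine. Once both regimes are summed, one obtains
\[
\sum_{Q < m \le 2Q} m \max_{h \bmod m} |Z(m,h)|^2 \le (N^2 + Q^2)\frac{\log Q}{H}\max_{n\le N}|a_n|^2 + (N + Q^2) H \log Q \sum_{n \le N}|a_n|^2,
\]
as claimed.
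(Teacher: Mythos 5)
Your plan misidentifies the role of the parameter $H$, and this is not a cosmetic issue: it makes the first term on the right-hand side unobtainable by your route. In the theorem, the term $(N^{2}+Q^{2})\frac{\log Q}{H}\max_{n}|a_{n}|^{2}$ \emph{decreases} as $H$ grows. In the paper's proof, $H$ is a threshold on the divisor function: one splits the moduli $m\sim Q$ according to whether $\tau(m)>H$ or $\tau(m)\le H$. There are $\ll Q\log Q/H$ moduli of the first kind (by $\sum_{m\le 2Q}\tau(m)\ll Q\log Q$), and applying the trivial bound $|Z(m,h)|\ll (N/m+1)\max_{n}|a_{n}|$ to just those moduli produces exactly the first term. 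In your plan, $H$ is instead the width of a major arc $\|r/m\|\le H/N$ around $0$; but widening that arc can only \emph{increase} its contribution, so no bookkeeping or dyadic decomposition of $\|r/m\|$ can produce a factor $H^{-1}$. The shape ``trivial bound times $Q\log Q/H$ exceptional moduli'' cannot be recovered from an arc of width proportional to $H$.

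There is a second, independent gap in your treatment of the remaining fractions. After orthogonality and Cauchy--Schwarz you are left with $\sum_{m\sim Q}\sum_{r=1}^{m-1}|\widehat a(r/m)|^{2}$, and the reduced fraction $b/d$ occurs here with multiplicity $\#\{m\sim Q:\ d\mid m\}\asymp Q/d$. For small $d$ this multiplicity is $\asymp Q$, and $|\widehat a(b/d)|^{2}$ can genuinely be of size $N\sum_{n}|a_{n}|^{2}$ (e.g.\ $|\widehat a(1/2)|$ when $a_{n}$ is supported on even $n$ --- which is precisely the situation in the application, since $B$ consists of even integers). That single fraction already contributes $\gg QN\sum_{n}|a_{n}|^{2}$, exceeding the claimed $(N+Q^{2})H\log Q\sum_{n}|a_{n}|^{2}$ for small $H$; and these fractions are nowhere near $\alpha=0$, so your major-arc carve-out does not see them. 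The paper circumvents this by not expanding in additive characters directly: it applies M\"obius inversion in $m$, pays a factor $\tau(m)$ by Cauchy--Schwarz over the divisors of $m$, and then uses Montgomery's identity $d\sum_{(b,d)=1}|T(b/d)|^{2}=\sum_{h=1}^{d}|f_{h}(d)|^{2}$ to bound the maximum over $h$. The upshot is that each reduced fraction $b/d$ carries the weight $M'_{d}=\sum_{t:\,td\sim Q}\tau(td)/t\ll H\log Q$ on the set where $\tau(m)\le H$, after which the large sieve gives the second term. So the divisor-threshold decomposition and Montgomery's formula are the essential missing ideas; the large-sieve step you do have is the routine part.
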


\begin{proof}
We write $m\sim Q$ for $Q<m\leq 2Q$.
Split the left hand side of the theorem into
$E_{1}+E_{2}$ with
\begin{equation*}
  E_{1}:= \sum_{\substack{m\sim Q\\\tau(m)>H}} 
    m \max_{h \text{ mod } m}  |Z(m,h)|^{2}
\end{equation*}
and
\begin{equation*}
  E_{2}:= \sum_{\substack{m\sim Q\\\tau(m)\leq H}}
    m \max_{h \text{ mod } m}  |Z(m,h)|^{2},
\end{equation*}
where $\tau(m)$ denotes the number of divisors of $m$.

Consider first $E_{1}$. Let
\begin{equation*}
  A:=\#\{ m\sim Q; \;\tau(m) > H \},
\end{equation*}
then
\begin{equation*}
   A H < 
  \sum_{\substack{m\sim Q \\ \tau(m)> H}} \tau(m) 
  \leq\sum_{m\leq 2Q} \tau(m) \ll  Q\log Q,
\end{equation*}
so
\begin{equation*}
  A\ll \frac{Q\log Q}{H}.
\end{equation*}
Since $Z(m,h)\ll(\frac{N}{m}+1)\max_{n\leq N} |a_{n}|$ we get
\begin{align*}
  E_{1}&\ll \sum_{\substack{m \sim Q \\ \tau(m)>H }} 
      m \max_{h} |Z(m,h)|^{2} \ll 
      \sum_{\substack{m \sim Q \\ \tau(m)>H }} m\biggl(\frac{N^{2}}{m^{2}}
      +1\biggr) \max_{n\leq N} |a_{n}|^{2}  \\ &\ll
      A \biggl(\frac{N^{2}}{Q} +Q\biggr)\max_{n\leq N} |a_{n}|^{2}
      \ll \biggl(\frac{N^{2}}{H} +\frac{Q^{2}}{H}\biggr) \log Q
      \max_{n\leq N} |a_{n}|^{2}.
\end{align*}
This is the first summand on the right hand side of Theorem \ref{t2}.

\zeile
Now we look at $E_{2}$. From Theorem \ref{t1} below we have
\[
  E_{2}=\sum_{\substack{m\sim Q\\ \tau(m)\leq H}} m
   \max_{0< h\leq m} |Z(m,h)|^{2} 
   \leq \sum_{\substack{d\leq 2Q}} M'_{d} \sum_{\substack{ 0<b\leq d \\
     (b,d)=1}} \Big| T\Big( \frac{b}{d} \Big)\Big|^{2}.
\]
Now we estimate 
\[
   M'_{d}= \sum_{\substack{m\sim Q,d|m\\\tau(m)\leq H}}
   \frac{\tau(m)d}{m}\ll H\log Q
\]
and an application of the {\em large sieve inequality} yields
\[
   E_{2}\ll H \log Q\; (N+Q^{2})\sum_{n\leq N}|a_{n}|^{2},
\]
which is the second term on the right hand side of Theorem 
\ref{t2}. 
\end{proof}

We use the following result.
\begin{theorem}
\label{t1}
  We have the estimates
  \[
      \sum_{m\in\mathcal{M}} m \max_{h \text{ \emph{mod} }m} \Big|
      Z(m,h)-\frac{Z}{m}\Big|^{2} \leq \sum_{d=2}^{\infty} M'_{d}
      \sum_{\substack{ 0<b<d \\ (b,d)=1}} 
      \Big|T\Big(\frac{b}{d}\Big)\Big|^{2}
  \] and
  \[
     \sum_{m\in\mathcal{M}} m \max_{h \text{ \emph{mod} }m} 
      |Z(m,h)|^{2} \leq \sum_{d=1}^{\infty} M'_{d}
      \sum_{\substack{ 0<b\leq d \\ (b,d)=1}} 
      \Big|T\Big(\frac{b}{d}\Big)\Big|^{2}
  \]
  with
  \[ 
     M'_{d} := \sum_{t, td\in\mathcal{M}} \frac{\tau(dt)}{t}.
  \]
\end{theorem}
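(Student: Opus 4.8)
The plan is to prove Theorem \ref{t1} by introducing the finite exponential sum $T(\alpha) := \sum_{n\leq N} a_n e(n\alpha)$ and expressing the congruence condition $n\equiv h\ (m)$ via additive characters modulo $m$. For a fixed $m$ and $h$, orthogonality of characters gives
\[
Z(m,h)-\frac{Z}{m} = \frac{1}{m}\sum_{\substack{0<b<m}} e\Bigl(-\frac{bh}{m}\Bigr) T\Bigl(\frac{b}{m}\Bigr),
\]
since the $b=0$ term contributes exactly $Z/m$. Grouping the fractions $b/m$ into reduced form $b'/d$ with $d\mid m$, $d>1$, and $(b',d)=1$, one obtains $Z(m,h)-Z/m = \frac{1}{m}\sum_{d\mid m,\,d>1}\sum_{\substack{0<b'<d\\(b',d)=1}} e(-b'h/m')\,T(b'/d)$ for the appropriate $m'$; taking absolute values, maximizing over $h$, and applying Cauchy--Schwarz over the pairs $(d,b')$ will be the core step. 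For the second inequality the same argument applies but with the full range $0<b\leq d$ (i.e. including $d=1$, $b=1$, which recovers the main term that was subtracted off in the first inequality), so that no subtraction of $Z/m$ is needed.

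More precisely, after Cauchy--Schwarz I would weight the sum so that the divisor $d$ and the length of its coprime residue class are balanced. Writing $m = dt$, the factor $1/m$ becomes $1/(dt)$, and the number of $b'$ with $(b',d)=1$ is $\ph(d)$; distributing the Cauchy--Schwarz weights as $\tau(dt)^{1/2}/(dt)^{1/2}$ against $\tau(dt)^{-1/2}(dt)^{-1/2}$ (or a similar split chosen to make the second factor collapse to $\sum_b |T(b/d)|^2$ with the right coefficient) yields, after multiplying by $m$ and summing over $m\in\mathcal M$, exactly the weight $M'_d = \sum_{t:\, td\in\mathcal M} \tau(dt)/t$ in front of $\sum_{(b,d)=1}|T(b/d)|^2$. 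The divisor function $\tau(dt)$ enters naturally because after Cauchy--Schwarz one has a sum over ordered pairs of divisors of $m$ of the form $\sum_{d_1 d_2 = \text{something}} 1 \leq \tau(m)$, which is then absorbed into the weight; keeping careful track of which $\tau$ appears (of $m$, of $d$, of $t$) is essential, and the statement tells us it should be $\tau(dt)=\tau(m)$.

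The main obstacle I anticipate is the bookkeeping in the Cauchy--Schwarz step: one must choose the split of weights so that (i) the ``dual'' side telescopes cleanly into $\sum_{d}\sum_{(b,d)=1}|T(b/d)|^2$ with no leftover $m$-dependence inside, (ii) the ``primal'' side, after being multiplied by the outer factor $m$ and summed over $m\in\mathcal M$, produces precisely $M'_d$ and not merely something $\ll M'_d$, since Theorem \ref{t1} is stated as a clean inequality rather than an order-of-magnitude bound, and (iii) the reduction of $b/m$ to lowest terms $b'/d$ is handled so that each reduced fraction $b'/d$ is counted with the correct multiplicity across the various $m$ divisible by $d$. A secondary point of care is the treatment of the maximum over $h$: since $|e(-b'h/m')|=1$, the bound on $|Z(m,h)-Z/m|$ after Cauchy--Schwarz is already independent of $h$, so $\max_h$ passes through harmlessly, but one should note this explicitly. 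Once the weights are pinned down correctly, the two displayed inequalities follow immediately, the only difference between them being whether the trivial character ($d=1$) is included, i.e. whether the main term $Z/m$ is subtracted.
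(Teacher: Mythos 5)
Your starting point is sound, and the object you arrive at is in fact the same one the paper works with in disguise: the paper defines $f_h(d)=\sum_{t\mid d}\mu(t)\frac{d}{t}Z\bigl(\frac{d}{t},h\bigr)$ and shows $mZ(m,h)-Z=\sum_{d\mid m,\,d\neq 1}f_h(d)$ by M\"obius inversion, while your Farey-fraction grouping gives exactly $f_h(d)=\sum_{0<b<d,\,(b,d)=1}e(-bh/d)\,T(b/d)$ (note the phase is $e(-bh/d)$, not $e(-b'h/m')$: since $b/m=b'/d$ in lowest terms, the exponential is already $e(-b'h/d)$). The paper then applies Cauchy--Schwarz over the divisors $d$ of $m$ only, picking up the factor $\tau(m)$, observes that $f_h(d)$ is $d$-periodic in $h$, bounds $\max_h|f_h(d)|^2\le\sum_{h=1}^{d}|f_h(d)|^2$, evaluates that sum exactly as $d\sum_{(b,d)=1}|T(b/d)|^2$ by Montgomery's formula (Parseval in $h$), and finally swaps the order of summation over $m=dt$ to produce $M'_d=\sum_{t}\tau(dt)/t$ on the nose.

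The genuine gap in your plan is exactly at the point you flag: the Cauchy--Schwarz weights. The split you write down, $\tau(dt)^{1/2}(dt)^{-1/2}$ against $\tau(dt)^{-1/2}(dt)^{-1/2}$, is constant in $(d,b')$ for fixed $m=dt$, so it amounts to unweighted Cauchy--Schwarz over all $\sum_{d\mid m,\,d>1}\varphi(d)=m-1$ pairs; that route produces the coefficient $\#\{m\in\mathcal{M}:d\mid m\}$ in front of $\sum_{b}|T(b/d)|^2$, which is \emph{not} dominated by $M'_d$ in general (the summand $\tau(dt)/t=\tau(m)d/m$ can be less than $1$), so the theorem as stated would not follow. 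Two repairs work. Either take weights $w_{d,b}=1/d$, so that $\sum_{d,b}w_{d,b}=\sum_{d\mid m,\,d>1}\varphi(d)/d\le\tau(m)$ and $\sum_{d,b}|T(b/d)|^2/w_{d,b}=\sum_{d\mid m,\,d>1}d\sum_{b}|T(b/d)|^2$, which yields an $h$-independent bound (so your remark that $\max_h$ passes through is then valid, and Montgomery's formula is not needed at all); or group by $d$ first, as the paper does, in which case the inner sums $\sum_{b}e(-bh/d)T(b/d)$ still depend on $h$ and you must replace $\max_h$ by $\sum_{h\bmod d}$ and invoke orthogonality in $h$. As written, your proposal mixes the two regimes: the $\tau(m)$ factor ``from pairs of divisors'' presumes grouping by $d$, while the claim that $\max_h$ is harmless because $|e(\cdot)|=1$ presumes Cauchy--Schwarz pushed down to individual pairs. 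Your treatment of the second inequality (including $d=1$ to recover the unsubtracted main term) is correct.
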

Here $\tau$ denotes the divisor function. 
Note that $M'_{d}\ll \tau(d)\log^{2}Q$ if
$\mathcal{M}\subseteq\{1,\dots,\lfloor Q\rfloor\}$ for a real number 
$Q>1$.

\begin{proof}
   Let 
\[
   f_{h}(m) := \sum_{d|m} \mu(d) \frac{m}{d} Z\Bigl(\frac{m}{d},h\Bigr),
\]
then we have by M\"obius inversion
\begin{align*}
  &\sum_{m} m \max_{h} |Z(m,h)-Z/m|^{2} =  \sum_{m} \frac{1}{m}
  \max_{h} |mZ(m,h)-Z|^{2} \\&= \sum_{m} \frac{1}{m} \max_{h} 
   \Big|\sum_{d|m} f_{h}(d)-Z\Big|^{2} = \sum_{m} \frac{1}{m}
   \max_{h}\Big|\sum_{\substack{ d|m\\d\neq 1}} f_{h}(d)\Big|^{2}\\
   &\leq \sum_{m} \frac{\tau(m)}{m} \sum_{\substack{ d|m\\d\neq 1}}
   \max_{h}|f_{h}(d)|^{2}.
\end{align*}
Now we note that $f_{h}(d)$ is $d$-periodic in $h$ for $d|m$,
since  $Z(t,h+d)=Z(t,h)$ for $t|d$, so
\begin{multline*}
  f_{h+dl}(d)=\sum_{t|d}\mu(t) \frac{d}{t}
  Z\Bigl(\frac{d}{t},h+dl\Bigr) = \sum_{t|d}\mu(t) \frac{d}{t}
  Z\Bigl(\frac{d}{t},h\Bigr) = f_{h}(d)\\ \text{ for all } l\in\Z,
\end{multline*}
therefore the maximum remains the same if taken only over $h$ with
$0< h\leq d$. We estimate this maximum by $\sum_{0< h\leq d}$, 
therefore an upper estimate for $\max_{0< h\leq q}|f_{h}(d)|^{2}$ is
\[
\sum_{h=1}^{d} |f_{h}(d)|^{2} = d\sum_{\substack{0\leq b \leq d\\(b,d)=1}}
\Big|T\Big(\frac{b}{d}\Big)\Big|^{2}
\]
by Montgomery's formula \cite{mf}, namely
\begin{equation*}
q\sum_{h=1}^{q}\Big| \sum_{d|q} \frac{\mu(d)}{d}
   Z\Big(\frac{q}{d},h\Big) \Big| ^{2}
   = \sum_{\substack{ 1\leq a\leq q \\ (a,q)=1}}
   \Big|T\Big(\frac{a}{q}\Big)\Big|^{2}
\end{equation*}
for the exponential sum 
\[
   T(\alpha):=\sum_{n\leq N} a_{n} e(\alpha n).
\]

So we get
\[
\sum_{m} m \max_{h} |Z(m,h)-Z/m|^{2} \leq \sum_{d=2}^{\infty} M'_{d}
\sum_{\substack{ 0<b<d \\ (b,d)=1 }} \Big|T\Big(\frac{b}{d}\Big)\Big|^{2}
\] 
with
\[
  M'_{d} := \sum_{m\in\mathcal{M}, d|m} \frac{\tau(m)d}{m} =
          \sum_{t, dt\in\mathcal{M}} \frac{\tau(td)}{t}.
\]
This shows the first inequality of Theorem \ref{t1}. 

The only change for the proof of the second inequality when replacing
$|Z(m,h)-Z/m|^{2}$ by $|Z(m,h)|^{2}$  is to include
the summand for $d=1$ in the sums over $d$. 
\end{proof}

Now we give the proof of Lemma \ref{lem:l}.
\begin{proof}
   Let $Q\leq x^{1/2}$ and $N=\lfloor x\rfloor$. 
  Then Theorem \ref{t2}, used with the indicator
   function for $B$ as sequence $(a_{n})_{n\leq N}$, shows
\begin{equation*}
    \sum_{q\sim Q} \max_{h\text{ mod }q} B_{h,q}(x) \ll
    (x^{2}H^{-1}+HxB(x))^{1/2}\log ^{1/2}Q.
\end{equation*}
The optimal choice for $H$ is $H=(x/B(x))^{1/2}$, and therefore
we have 
\begin{equation}
\label{eq:ea2}
\sum_{q\sim Q} \max_{h\text{ mod }q} B_{h,q}(x) 
   \ll x^{3/4}B(x)^{1/4}\log ^{1/2}Q.
\end{equation}

So if $B(x)$ is small, we expect  $B_{h,q}(x)$ to be
small too.

Consider therefore for $C>0$ the number of exceptional modulii
\[
   \mathcal{M}_{Q}:=\{q\sim Q;\ B_{h,q}(x)>\frac{x}{q\log ^{C}x}
   \text{ for any } h \text{ mod }q\}.
\]
It follows that 
\begin{align*}
   &\#\mathcal{M}_{Q}\cdot\frac{x}{Q\log ^{C}x} \ll
   \sum_{q\in\mathcal{M}_{Q}} \max_{h} B_{h,q}(x) \\ &\qquad\ll x^{3/4}
   B(x)^{1/4} \log ^{1/2}Q \ll \frac{x}{\log ^{2C+1}x},
\end{align*}
so $\#\mathcal{M}_{Q}\ll \frac{Q}{\log ^{C+1}x}$.

If we split $[1,x^{1/2}]$ into $\ll \log x$ many dyadic intervals,
we get
\[
   \#\{q\leq x^{1/2};\; B_{h,q}(x)>\frac{x}{q\log ^{C}x} \text{ for
     any }h\text{ mod }q \} \ll \frac{x^{1/2}}{\log ^{C}x}.
\]
This proves the Lemma.
\end{proof}


\end{document}